\theoremstyle{plain}
\newtheorem{theorem}{Theorem}[section]
\newtheorem{Def}[theorem]{Definition}
\newtheorem{lemma}[theorem]{Lemma}
\newtheorem{proposition}[theorem]{Proposition}
\theoremstyle{definition}
\newtheorem{definition}[theorem]{Definition}
\theoremstyle{remark}
\newtheorem{case[theorem]}{Case}
\def \R{{\mathbb R}}
\def \C{{\mathbb C}}
\def\H{{\mathbb H}}
\def\norm#1.#2.{\lVert#1\rVert_{#2}}
\def\R{\mathbb R}
\def \H{{\mathcal H}}
\title{Weighted Norm Inequalities for the Opdam--Cherednik Transform}
\author{Shyam Swarup Mondal} 
\address{Department of Mathematics, Indian Institute of Technology Delhi, New Delhi 110016, India} 
\email{mondalshyam055@gmail.com}
\author{Anirudha Poria}
\address{Department of Mathematics, Indian Institute of Technology Madras, Chennai 600036, India}
\address{Department of Mathematics, Bar-Ilan University, Ramat-Gan 5290002, Israel}
\email{anirudhamath@gmail.com}
\thanks{Research supported by ERC Starting Grant No. 713927.}
\keywords{Opdam--Cherednik transform;  weighted norm inequalities; Heisenberg--Pauli--Weyl inequality;  Hardy–Littlewood inequality; uncertainty principle.}
\subjclass[2010]{Primary  44A15; Secondary 43A32, 33C45, 26D10. }
\date{\today}
\begin{document}
\maketitle

\begin{abstract} 
In this paper, we study several weighted norm inequalities for the Opdam--Cherednik transform. We establish different versions of the Heisenberg--Pauli--Weyl inequality for this transform. In particular, we give an extension of this inequality using weights with different exponents and present a variation of the inequality that incorporates $L^p$-norms for the Opdam--Cherednik transform. Also, we prove a version of the Hardy--Littlewood inequality for this transform.  Finally, we give other variations of the Heisenberg--Pauli--Weyl inequality such as the Nash-type and Clarkson-type inequalities for the Opdam--Cherednik transform. 
\end{abstract}

\section{Introduction}  
Uncertainty principles have long been a mainstay of mathematical physics and classical Fourier analysis and have been studied from different points of view by several authors  \cite{fol97, hav94}.  The classical uncertainty principle is an essential restriction in harmonic analysis and it states  that a non-zero function and its Fourier transform cannot be  simultaneously sharply localized, i.e.,     it is impossible for a non-zero function and its Fourier transform to be simultaneously small. Depending on various ways of measuring the localization of a function, one can obtain different interpretations of the uncertainty principle in different contexts. Some classical uncertainty principles for the Fourier transform are   Hardy \cite{har33}, Cowling and Price \cite{cow83}, Morgan \cite{mor34},  and  Beurling \cite{hor1991} theorems. Uncertainty inequalities are some special class of uncertainty principles that give us information about how a function and its Fourier transform relate and have got considerable importance in signal analysis, physics, optics, and many other well-known areas  \cite{ric14,gro01,dono89,bial85}.  A well-known example of uncertainty inequality is the Heisenberg--Pauli--Weyl (HPW) inequality. The remarkable Heisenberg uncertainty principle \cite{heisenberg} was generalized by Weyl in \cite{weyl28}  and attributed the result to Pauli. The HPW inequality states that  for every  $f\in L^2(\R^n)$, we have
$$\left(\int_{\mathbb{R}^{n}} x_{j}^{2}|f(x)|^{2} \;d x\right)\left(\int_{\mathbb{R}^{n}} \lambda_{j}^{2}|\widehat{f}(\lambda)|^{2} \;d \lambda \right) \geq \frac{1}{4}\left(\int_{\mathbb{R}^{n}}|f(x)|^{2} \;d x\right)^{2}, \quad j \in\{1, \ldots, n\} .$$   Another variation  of the   HPW inequality states that   for  every $f \in L^{2}(\mathbb{R}^{n})$ and $\alpha>0$, there exists a positive constant $C_\alpha$ such that 
$$
\| |\cdot |^{\alpha} f  \|_{2}\; \||\cdot|^{\alpha} \widehat{f} \|_{2}\geq C_\alpha\|f\|_{2}^{2}.
$$
A strong additive version of this inequality for the classical Fourier transform on $\mathbb{R}^n$ was proved by Cowling and Price in \cite{cow83} and it states that for  all  $p, q \in[1, \infty]$ and for   all tempered functions $f$ for which $\widehat{f}$ is a function, there exists a constant $C>0$ such that
 $$
  \| |\cdot |^{a} f  \|_{p}+ \||\cdot|^{b} \widehat{f} \|_{q}\geq  C \|f\|_{2}^{2} , \quad a, b>0
 $$
 if and only if
 $
 a>\frac{1}{2}-\frac{1}{p} $ and $ b>\frac{1}{2}-\frac{1}{q}.
 $ 

Considerable attention has been devoted to discovering generalizations to new contexts for the HPW inequality and its weighted variations for various generalized transforms by several researchers.  For instance, these uncertainty inequalities were obtained in \cite{ ras2} for the classical Fourier transform,  in \cite{omrii}   for the Fourier transform associated with the Riemann–Liouville operator, in \cite{mse}  for the Fourier transform associated with the spherical mean operator, in \cite{john16} for the Heckman--Opdam transform, in \cite{ros2, shim} for the Dunkl transform,  and in \cite{ros1} for the generalized Hankel transform. Further, Wiener in \cite{win33} established the  HPW inequality  using the Hermite polynomials  for the classical Fourier transform.
Building on the ideas of Ciatti--Ricci--Sundari \cite{cia},  Soltani in \cite{soltani} proved a  more general form of the HPW  inequality for the Dunkl transform using weights with different exponents. Moreover,  Johansen in \cite{joha2016} proved several  weighted inequalities and uncertainty principles for the $(k, a)$-generalized Fourier transform.
In this paper,  we study  the  HPW inequality and its several weighted variations for the Opdam--Cherednik transform.

  Recently, various uncertainty principles  were investigated    for the Opdam--Cherednik transform.   Daher et al. in \cite{daher12} studied some  qualitative uncertainty principles for the Cherednik transform as a generalization of  the Euclidean uncertainty principles for the Fourier transform.   Mejjaoli in \cite{ majjoli14}  obtained  some   qualitative uncertainty principles for the Opdam--Cherednik transform.  These results  were  further extended to modulation spaces by  the second author in \cite{por21}.  Moreover, the  Benedicks-type uncertainty principle  for  the Opdam--Cherednik transform was  investigated by Achak and Daher  in \cite{dah18}.  In this paper, we establish several weighted norm inequalities for the Opdam--Cherednik transform. We prove different versions of the HPW inequality for this transform. In particular, we give an extension of this inequality using weights with different exponents and present a variation of the inequality that incorporates $L^p$-norms for the Opdam--Cherednik transform. Also, we study a version of the Hardy--Littlewood inequality for this transform.  Finally, we give other variations of the HPW inequality such as the Nash-type and Clarkson-type inequalities for the Opdam--Cherednik transform. Mainly, we establish the following uncertainty inequalities for the Opdam--Cherednik transform. First, we prove that  for   every $f\in L^2(\R, A_{\alpha, \beta})$,      there exists a constant $ C( \alpha, \beta  )>0$ such that 
  	$$\|| \cdot | f\|_{L^2(\R, A_{\alpha, \beta})} \; \||\cdot|\H_{\alpha, \beta} f\|_{L^2(\R, \sigma_{\alpha, \beta})} \geq  C( \alpha, \beta   ) \| f\|_{L^2(\R, A_{\alpha, \beta})}^2.
  	$$
Then,    we give  another variation of the HPW  inequality using weights with different exponents as follows: 
  	for every  $f\in L^2(\R, A_{\alpha, \beta})$ and $a, b\geq 1$,  there exists a constant $ C( \alpha, \beta  )>0$ such that 
  	$$\||\cdot|^a f\|_{L^2(\R, A_{\alpha, \beta})}^{\frac{b}{a+b}} \; \||\cdot|^b\H_{\alpha, \beta} f\|_{L^2(\R, \sigma_{\alpha, \beta})}^{\frac{a}{a+b}} \geq  C( \alpha, \beta   )^{\frac{a b}{a+b}}\| f\|_{L^2(\R, A_{\alpha, \beta})}.
  	$$
 Moreover, using the heat kernel decay estimates, we obtain a version  of the HPW  inequality which incorporates $L^p$-norms and it states as follows:	let   $p\in (1, 2], b>0, 0<a<\frac{1}{q}$,   where  $q$ is the conjugate exponent of  $p$. Then  for every $f\in L^p(\R, A_{\alpha, \beta})$  and any $t>1$, there exists a constant   $C(  a, b)>0 $ such that for all $b\leq 2$, we have 
    	$$
    	\left\|   \H_{\alpha, \beta} f \right\|_{L^q(\R, \sigma_{\alpha, \beta})} \leq  
    	C( a, b)\;t_0^{\frac{ 1}{2q}+1}\; e^{\frac{2\rho t_0^{\frac{1}{2}}}{q}}   \||\cdot |^a f\|_{L^p(\R, A_{\alpha, \beta})}^{\frac{b}{a+b}} \; \||\cdot|^b\H_{\alpha, \beta} f\|_{L^q(\R, \sigma_{\alpha, \beta})}^{\frac{a}{a+b}},
    	$$
    	and for all $b>2$, we have 
    	\begin{align*}
    		\left\|   \H_{\alpha, \beta} f \right\|_{L^q(\R, \sigma_{\alpha, \beta})} &\leq  
    		C(  a, 1)^{\frac{ b(a+1)}{a+b} } \left[ b(b-1)^{\frac{1}{b}-1} \right]^{\frac{ab}{a+b}}\bigg( t_1^{\frac{1}{2q}+1} e^{\frac{2\rho t_1^{\frac{1}{2}}}{q}}\bigg)^{\frac{ b(a+1)}{a+b} }  \\&\qquad \times \||\cdot|^a f\|_{L^p(\R, A_{\alpha, \beta})}^{\frac{b}{a+b}} \; \||\cdot|^b\H_{\alpha, \beta} f\|_{L^q(\R, \sigma_{\alpha, \beta})}^{\frac{a}{a+b}},
    	\end{align*}
    	where $t_0=t_0(a, b)=\left(\frac{a}{b}\right)^{\frac{2}{a+b}}\big(N\frac{\left\||\cdot|^{a} f\right\|_{L^p(\R, A{\alpha, \beta})}}{\||\cdot |^{b} \H_{\alpha, \beta} f  \|_{L^q(\R, \sigma_{\alpha, \beta})}}\big)^{\frac{2}{a+b}}, N\in \mathbb{N}$ and $t_1=t_0(a, 1)$.
Further, we give   some other  variations of the HPW  inequality that  involves a mixed  of  $L^1$ and $L^2$ norms  estimate.  In particular, we prove the Nash-type and Clarkson-type inequalities for the Opdam--Cherednik transform. The Nash-type inequality states as follows:   for every $f \in L^1(\R, A_{\alpha, \beta})\cap L^2(\R, A_{\alpha, \beta})$ and $s > 0$, there exists a constant $C>0$ such that 
	$$
	\left\|   \H_{\alpha, \beta} f \right\|_{L^2(\R, \sigma_{\alpha, \beta})}^2	\leq C  \left\| f\right\|_{L^1 (\R, A_{\alpha, \beta})}^{\frac{4s}{2\alpha+3+2s}}	\left\|   |\cdot|^s\H_{\alpha, \beta} f \right\|_{L^2(\R, \sigma_{\alpha, \beta})}^{\frac{2(2\alpha+3)}{2\alpha+3+2s}}.
	$$
	Finally, we prove the Clarkson-type inequality as follows:    for every $f \in L^1(\R, A_{\alpha, \beta})\cap L^2(\R, A_{\alpha, \beta})$ and $s>0$, there exists a constant $C>0$ and  $N\in \mathbb{N}$ such that  
	$$	\left\|     f \right\|_{L^1(\R, A_{\alpha, \beta})} 
	\leq C \exp\left\{ {\rho \left(N\frac{	\left\|   |\cdot|^{2s}  f \right\|_{L^1(\R, A_{\alpha, \beta})}}{\left\| f\right\|_{L^2 (\R, A_{\alpha, \beta})}}\right)^{\frac{2}{1+4s}}}\right\}  \left\| f\right\|_{L^2 (\R, A_{\alpha, \beta})}^{\frac{4s}{1+4s}}	\left\|   |\cdot|^{2s}  f \right\|_{L^1(\R, A_{\alpha, \beta})}^{\frac{1}{1+4s}}.$$

The paper is organized as follows. In Section \ref{sec2}, we present some preliminaries related to the Opdam--Cherednik transform. Also, we recall some basic definitions and properties of the weak type $L^p$-space for $\sigma$-finite measure spaces. In Section \ref{sec3}, we prove several weighted norm inequalities for the Opdam--Cherednik transform. First, we establish a version of the Hardy--Littlewood inequality for this transform. Then, we prove a version of the HPW  inequality for the Opdam--Cherednik transform and give an extension of this inequality using weights with different exponents. Also, we present another variation of the HPW inequality for this transform, which incorporates $L^p$-norms. Finally, we study other variations of the HPW inequality. In particular, we give the Nash-type and Clarkson-type inequalities for the Opdam--Cherednik transform.
\section{Preliminaries}\label{sec2}
\subsection{Harmonic analysis and the Opdam--Cherednik transform}

In this subsection, we collect the necessary definitions and results from the harmonic analysis related to the Opdam--Cherednik transform. The main references for this subsection are \cite{and15, mej14, opd95, opd00, sch08}. However, we will use the same notation as in \cite{por21}.

Let $T_{\alpha, \beta}$ denote the Jacobi--Cherednik differential--difference operator (also called the Dunkl--Cherednik operator)
\[T_{\alpha, \beta} f(x)=\frac{d}{dx} f(x)+ \Big[ 
(2\alpha + 1) \coth x + (2\beta + 1) \tanh x \Big] \frac{f(x)-f(-x)}{2} - \rho f(-x), \]
where $\alpha, \beta$ are two parameters satisfying $\alpha \geq \beta \geq -\frac{1}{2}$ and $\alpha > -\frac{1}{2}$, and $\rho= \alpha + \beta + 1$. Let $\lambda \in \C$. The Opdam hypergeometric functions $G^{\alpha, \beta}_\lambda$ on $\R$ are eigenfunctions $T_{\alpha, \beta} G^{\alpha, \beta}_\lambda(x)=i \lambda  G^{\alpha, \beta}_\lambda(x)$ of $T_{\alpha, \beta}$ that are normalized such that $G^{\alpha, \beta}_\lambda(0)=1$. The eigenfunction $G^{\alpha, \beta}_\lambda$ is given by
\[G^{\alpha, \beta}_\lambda (x)= \varphi^{\alpha, \beta}_\lambda (x) - \frac{1}{\rho - i \lambda} \frac{d}{dx}\varphi^{\alpha, \beta}_\lambda (x)=\varphi^{\alpha, \beta}_\lambda (x)+ \frac{\rho+i \lambda}{4(\alpha+1)} \sinh 2x \; \varphi^{\alpha+1, \beta+1}_\lambda (x),  \]
where $\varphi^{\alpha, \beta}_\lambda (x)={}_2F_1 \left(\frac{\rho+i \lambda}{2}, \frac{\rho-i \lambda}{2} ; \alpha+1; -\sinh^2 x \right) $ is the classical Jacobi function.

For every $ \lambda \in \C$ and $x \in  \R$, the eigenfunction
$G^{\alpha, \beta}_\lambda$ satisfy
\[ |G^{\alpha, \beta}_\lambda(x)| \leq C \; e^{-\rho |x|} e^{|\text{Im} (\lambda)| |x|},\] 
where $C$ is a positive constant. Since $\rho > 0$, we have
\begin{equation}\label{eq1}
|G^{\alpha, \beta}_\lambda(x)| \leq C \; e^{|\text{Im} (\lambda)| |x|}. 
\end{equation}
Let us denote by $C_c (\R)$ the space of continuous functions on $\R$ with compact support. The Opdam--Cherednik transform is the Fourier transform in the trigonometric Dunkl setting, and it is defined as follows.
\begin{Def}
Let $\alpha \geq \beta \geq -\frac{1}{2}$ with $\alpha > -\frac{1}{2}$. The Opdam--Cherednik transform $\mathcal{H}_{\alpha, \beta} (f)$ of a function $f \in C_c(\R)$ is defined by
\[ \H_{\alpha, \beta} (f) (\lambda)=\int_{\R} f(x)\; G^{\alpha, \beta}_\lambda(-x)\; A_{\alpha, \beta} (x) dx \quad \text{for all } \lambda \in \C, \] 
where 
\begin{align}\label{eq67}
	A_{\alpha, \beta} (x)= (\sinh |x| )^{2 \alpha+1} (\cosh |x| )^{2 \beta+1}.
\end{align}
 The inverse Opdam--Cherednik transform for a suitable function $g$ on $\R$ is given by
\[ \H_{\alpha, \beta}^{-1} (g) (x)= \int_{\R} g(\lambda)\; G^{\alpha, \beta}_\lambda(x)\; d\sigma_{\alpha, \beta}(\lambda) \quad \text{for all } x \in \R, \]
where $$d\sigma_{\alpha, \beta}(\lambda)= \left(1- \dfrac{\rho}{i \lambda} \right) \dfrac{d \lambda}{8 \pi |C_{\alpha, \beta}(\lambda)|^2}$$ and 
$$C_{\alpha, \beta}(\lambda)= \dfrac{2^{\rho - i \lambda} \Gamma(\alpha+1) \Gamma(i \lambda)}{\Gamma \left(\frac{\rho + i \lambda}{2}\right)\; \Gamma\left(\frac{\alpha - \beta+1+i \lambda}{2}\right)}, \quad \lambda \in \C \setminus i \mathbb{N}.$$
\end{Def}
We have the following estimates for $C_{\alpha, \beta}$ (see \cite{majjoli14}). There exists $N > 0$ such that for all $\lambda \in \R$ with $|\lambda| \geq N$ and for constants $k_1, k_2 > 0$, we have   
	\begin{align}\label{eq60}
		k_1 |\lambda|^{2 \alpha +2}\leq |C_{\alpha, \beta} (\lambda)|^{-2} \leq  k_2 |\lambda|^{2 \alpha +2}.
	\end{align}

The Plancherel formula is given by 
\begin{equation}\label{eq03}
\int_{\R} |f(x)|^2 A_{\alpha, \beta}(x) dx=\int_\R \H_{\alpha, \beta} (f)(\lambda) \overline{\H_{\alpha, \beta} ( \check{f})(-\lambda)} \; d \sigma_{\alpha, \beta} (\lambda),
\end{equation}
where $\check{f}(x):=f(-x)$.

Let $L^p(\R,A_{\alpha, \beta} )$ (resp. $L^p(\R, \sigma_{\alpha, \beta} )$), $p \in [1, \infty] $, denote the $L^p$-spaces corresponding to the measure $A_{\alpha, \beta}(x) dx$ (resp. $d | \sigma_{\alpha, \beta} |(x)$).   Let $p \in[1,2)$  and $p'$ is the conjugate exponent of $p$. Then there exists a constant  $C_p>0$ such that
\begin{align}\label{eq14}
	\left\|\H_{\alpha, \beta}f\right\|_{L^{p'}(\R,\sigma_{\alpha, \beta} )} \leq C_p\|f\|_{L^{p}(\R,A_{\alpha, \beta} )},
\end{align}
	  for all $f \in L^{p}(\R,A_{\alpha, \beta} )$ (see \cite{john16}). In particular,  if $f \in L^{1}(\R,A_{\alpha, \beta} )$, then 
\begin{align}\label{eq66}
\left\|\H_{\alpha, \beta}f\right\|_{L^{\infty}(\R,\sigma_{\alpha, \beta} )} \leq  \|f\|_{L^{1}(\R,A_{\alpha, \beta} )}.
\end{align}
Let $t > 0$. The heat kernel $E^{\alpha, \beta}_t$ associated with the Jacobi--Cherednik operator is defined by
\begin{equation}\label{eq04}
E^{\alpha, \beta}_t(x)=\H_{\alpha, \beta}^{-1}(e^{-t \lambda^2})(x) \quad \text{for all } x \in \R.
\end{equation}
For all $t > 0$, $E^{\alpha, \beta}_t$ is an $C^\infty$-function on $\R$. Moreover, for all $t > 0$ and all $\lambda \in \R$, we have
\begin{equation}\label{eq05}
\H_{\alpha, \beta} (E^{\alpha, \beta}_t) (\lambda)=e^{-t \lambda^2}.
\end{equation}

	Let $r>1$  and $B_{r}=\{x  \in \R  : ~| x|\leq r \}$. In the following proposition, we obtain estimates for the measure of $B_r$ with respect to $A_{\alpha, \beta}$ and $\sigma_{\alpha, \beta}.$
	
	\begin{proposition} \label{eq72}\quad 
		\begin{enumerate}
			\item \label{eq50} $A_{\alpha, \beta}\left(B_{r}\right)\leq 2 r e^{2\rho r},$ where $\rho> 0$.
			\item \label{eq55} $\sigma_{\alpha, \beta}\left(B_{r}\right)\leq  C~ r^{2\alpha+3}$, where $C $ is a positive constant.
		\end{enumerate}
		
	\end{proposition}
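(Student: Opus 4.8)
The plan is to estimate both measures directly from their explicit densities, reducing everything to elementary one–variable integrals.

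For part \eqref{eq50}, since $A_{\alpha,\beta}$ is even (it depends only on $|x|$) I would write
$A_{\alpha,\beta}(B_r)=2\int_0^r (\sinh x)^{2\alpha+1}(\cosh x)^{2\beta+1}\,dx$.
Because $\alpha\ge\beta\ge-\tfrac12$ and $\alpha>-\tfrac12$, the exponents satisfy $2\alpha+1>0$ and $2\beta+1\ge 0$, so the pointwise bounds $0\le\sinh x\le\cosh x\le e^{x}$ for $x\ge 0$ give $(\sinh x)^{2\alpha+1}(\cosh x)^{2\beta+1}\le e^{(2\alpha+1)x}e^{(2\beta+1)x}=e^{2\rho x}$, using $(2\alpha+1)+(2\beta+1)=2\rho$. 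On $[0,r]$ this is at most $e^{2\rho r}$, and integrating yields $A_{\alpha,\beta}(B_r)\le 2\int_0^r e^{2\rho r}\,dx=2r\,e^{2\rho r}$, which is exactly the claim.

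For part \eqref{eq55}, I read $\sigma_{\alpha,\beta}(B_r)$ as the total variation $|\sigma_{\alpha,\beta}|(B_r)=\frac{1}{8\pi}\int_{|\lambda|\le r}\bigl|1-\tfrac{\rho}{i\lambda}\bigr|\,|C_{\alpha,\beta}(\lambda)|^{-2}\,d\lambda$, and note $\bigl|1-\tfrac{\rho}{i\lambda}\bigr|=(1+\rho^2/\lambda^2)^{1/2}\le 1+\rho/|\lambda|$. Fixing the constant $N>0$ furnished by \eqref{eq60}, I would split $\{|\lambda|\le r\}$ into $\{|\lambda|\le N\}$ and $\{N<|\lambda|\le r\}$ (the second region empty when $r\le N$). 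On the inner region the density is locally integrable: away from the origin it is continuous, while near $\lambda=0$ one has $\Gamma(i\lambda)=\tfrac{1}{i\lambda}\Gamma(1+i\lambda)$, hence $|C_{\alpha,\beta}(\lambda)|^{-2}=O(\lambda^2)$, which more than compensates the $O(1/|\lambda|)$ growth of the factor $1-\rho/(i\lambda)$ and leaves an $O(|\lambda|)$ density; thus $|\sigma_{\alpha,\beta}|(\{|\lambda|\le N\})=:C_1<\infty$. On the outer region, \eqref{eq60} gives $|C_{\alpha,\beta}(\lambda)|^{-2}\le k_2|\lambda|^{2\alpha+2}$ and $\bigl|1-\tfrac{\rho}{i\lambda}\bigr|\le 1+\rho/N$, so that part is at most $\frac{(1+\rho/N)k_2}{8\pi}\cdot 2\int_0^r\lambda^{2\alpha+2}\,d\lambda=\frac{(1+\rho/N)k_2}{4\pi(2\alpha+3)}\,r^{2\alpha+3}=:C_2\,r^{2\alpha+3}$. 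Combining, $|\sigma_{\alpha,\beta}|(B_r)\le C_1+C_2 r^{2\alpha+3}$; since $r>1$ and $2\alpha+3>0$ we have $C_1\le C_1 r^{2\alpha+3}$, whence $\sigma_{\alpha,\beta}(B_r)\le (C_1+C_2)r^{2\alpha+3}$, proving \eqref{eq55} with $C=C_1+C_2$.

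The only genuinely non-routine point is the behaviour of the $\sigma_{\alpha,\beta}$-density near $\lambda=0$: one must verify that the simple zero of $|C_{\alpha,\beta}(\lambda)|^{-2}$ there absorbs the simple pole of $1-\rho/(i\lambda)$, so that a small ball around the origin has finite (indeed $O(r^2)$) measure. Everything else is a matter of elementary bounds on $\sinh,\cosh$ and a power-function integral, together with the $c$-function estimate \eqref{eq60} already recorded in the excerpt.
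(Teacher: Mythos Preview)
Your argument is correct and matches the paper's approach closely. Part~(1) is identical: bound $A_{\alpha,\beta}(x)\le e^{2\rho|x|}$ and integrate. For part~(2), both you and the paper split $B_r$ into an inner and an outer region and invoke the $c$-function estimate \eqref{eq60} on the outer part; the paper splits at $1$ and applies \eqref{eq60} on all of $[0,r]$ (implicitly treating the estimate as global), whereas you split at the threshold $N$ appearing in \eqref{eq60} and separately verify finiteness on $[0,N]$ via the $\Gamma$-function asymptotics of $C_{\alpha,\beta}$ at the origin. Your handling of the inner region is therefore more scrupulous about the stated hypothesis $|\lambda|\ge N$ in \eqref{eq60}, but the overall strategy and the resulting bound are the same.
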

	\begin{proof}
	(1)	Since $A_{\alpha, \beta} (x)= (\sinh |x| )^{2 \alpha+1} (\cosh |x| )^{2 \beta+1}\leq e^{2(\alpha+\beta+1)|x|} =e^{2\rho |x|} $,  where $\rho=\alpha+\beta+1$, we have
		\begin{align*} 		 A_{\alpha, \beta}\left(B_{r}\right) 			=\int_{|x|\leq r}A_{\alpha, \beta}(x)~dx 			\leq  \int_{|x|\leq r} e^{2\rho |x|}~dx \leq 2 r e^{2\rho r}. 		\end{align*}
		
		(2) Using the estimates in (\ref{eq60}), we have	\begin{align*}  
		 	\sigma_{\alpha, \beta}\left(B_{r}\right)
			&=\int_{|\lambda|\leq r}    d |\sigma_{\alpha, \beta}| (\lambda)\\
			&\leq  \frac{k_2}{4\pi }\int_{0}^1  \sqrt{\lambda^2+  \rho^2}    \; {\lambda}^{2\alpha+1}\;d\lambda +\frac{k_2}{4\pi } \int_{1}^r  \sqrt{1+\frac{  \rho^2}{\lambda^2}}   \; {\lambda}^{2\alpha+2}\;d\lambda \\
			&\leq  \frac{k_2}{4\pi }\sqrt{1+  \rho^2 }   +\frac{k_2}{4\pi }\sqrt{1+  \rho^2 }     \int_{0}^r    \lambda^{2\alpha+2}\;d\lambda \\
			&\leq  \frac{k_2}{4\pi }\sqrt{1+  \rho^2 }    \left(1+\frac{1 }{ 2\alpha+3} \right)\; {r} ^{2\alpha+3}=C{r} ^{2\alpha+3},
		\end{align*}
 where $C=\frac{k_2}{4\pi }\sqrt{1+  \rho^2 }    (1+\frac{1 }{ 2\alpha+3} ).$
	\end{proof}
\noindent Let  $\gamma_{t}(\lambda)=\H_{\alpha, \beta} (E^{\alpha, \beta}_t) (\lambda)=e^ {-t |\lambda|^{2}}$ for all $t >0$.  Then, we obtain the following estimate for $\gamma_t.$
\begin{proposition}\label{eq51}
Let $q\geq 1.$ Then 	$	\|\gamma_{t}  \|_{L^q(\R, \sigma_{\alpha, \beta})} \leq \left( C+D~ t^{-(\alpha+\frac{3}{2})}\right)^{\frac{1}{q}}$, where $C $ and $D$ are   positive constants.
\end{proposition}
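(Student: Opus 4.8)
The plan is to bound the $q$-th power directly, writing
$\|\gamma_{t}\|_{L^q(\R,\sigma_{\alpha,\beta})}^q=\int_{\R}e^{-qt|\lambda|^2}\,d|\sigma_{\alpha,\beta}|(\lambda)$, and then take $q$-th roots at the very end. I would fix the threshold $N>0$ from \eqref{eq60} and split the integral over $\R$ into the bounded region $\{|\lambda|\le N\}$ and the tail $\{|\lambda|>N\}$, estimating each separately.

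On $\{|\lambda|\le N\}$ I would simply use $e^{-qt|\lambda|^2}\le 1$, so that this part is at most $|\sigma_{\alpha,\beta}|(B_N)$, which by Proposition~\ref{eq72}(2) (or just by local finiteness of $|\sigma_{\alpha,\beta}|$) is a constant $C=C(\alpha,\beta)$ independent of $t$ and $q$. On the tail $\{|\lambda|>N\}$ I would use the explicit density $d|\sigma_{\alpha,\beta}|(\lambda)=\sqrt{1+\rho^2/\lambda^2}\,\dfrac{d\lambda}{8\pi|C_{\alpha,\beta}(\lambda)|^2}$: for $|\lambda|>N$ the first factor is bounded by $\sqrt{1+\rho^2/N^2}$, and the upper bound in \eqref{eq60} gives $|C_{\alpha,\beta}(\lambda)|^{-2}\le k_2|\lambda|^{2\alpha+2}$, so this part is at most a constant multiple of $\int_0^\infty e^{-qt\lambda^2}\lambda^{2\alpha+2}\,d\lambda$. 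The substitution $u=\sqrt{qt}\,\lambda$ evaluates this to $(qt)^{-(\alpha+3/2)}\cdot\tfrac12\Gamma(\alpha+\tfrac32)$; since $\alpha+\tfrac32\ge 1>0$ and $q\ge 1$ we have $(qt)^{-(\alpha+3/2)}\le t^{-(\alpha+3/2)}$, which yields the second term $D\,t^{-(\alpha+3/2)}$ with $D=D(\alpha,\beta)$. Adding the two pieces gives $\|\gamma_{t}\|_{L^q(\R,\sigma_{\alpha,\beta})}^q\le C+D\,t^{-(\alpha+3/2)}$, and taking the $q$-th root finishes the proof.

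There is no serious obstacle here. The only point requiring a little care is the behavior of the measure near $\lambda=0$, where $\sqrt{1+\rho^2/\lambda^2}$ blows up but is compensated by the vanishing of $|C_{\alpha,\beta}(\lambda)|^{-2}$, so that $|\sigma_{\alpha,\beta}|$ is finite on $B_N$; this is exactly what is recorded in Proposition~\ref{eq72}(2), so I would simply invoke it rather than re-deriving it. The genuine content of the argument is the elementary Gaussian/Gamma-function computation that extracts the sharp decay $t^{-(\alpha+3/2)}$ from the polynomial growth $|\lambda|^{2\alpha+2}$ of the Plancherel density at infinity.
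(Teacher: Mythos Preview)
Your proposal is correct and follows essentially the same approach as the paper: split the integral $\int_{\R}e^{-qt|\lambda|^2}\,d|\sigma_{\alpha,\beta}|(\lambda)$ into a bounded region (contributing the constant $C$) and a tail where the Plancherel density is $\asymp|\lambda|^{2\alpha+2}$, then evaluate the resulting Gaussian moment to extract $t^{-(\alpha+3/2)}$. The only cosmetic differences are that the paper splits at $|\lambda|=1$ rather than at the threshold $N$ from \eqref{eq60} (your choice is arguably more careful, since \eqref{eq60} is only asserted for $|\lambda|\ge N$), and the paper leaves the factor $q^{-(\alpha+3/2)}$ inside its constant $D$ whereas you absorb it via $q\ge 1$ to obtain a $q$-independent $D$.
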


\begin{proof}We have
	\begin{align*}  
\|\gamma_{t}  \|_{L^q(\R, \sigma_{\alpha, \beta})}^q 
		&=\int_{\R} e^{-qt|\lambda|^2}   d |\sigma_{\alpha, \beta}| (\lambda)\\
				&\leq \frac{k_2}{4\pi}  \int_{0 }^1  e^{-qt\lambda^2} \sqrt{\lambda^2+\rho^2}   \; {\lambda} ^{2\alpha+1}\;d\lambda +\frac{k_2}{4\pi}  \int_{1}^\infty e^{-qt\lambda^2}  \sqrt{1+\frac{  \rho^2}{\lambda^2}}   \; {\lambda}^{2\alpha+2}\;d\lambda \\
&\leq \frac{k_2}{4\pi} \sqrt{1+  \rho^2 }  +\frac{k_2}{4\pi} \sqrt{1+  \rho^2 }  \int_{1}^\infty  e^{-qt\lambda^2}  \; {\lambda} ^{2\alpha+2}\;d\lambda \\
&\leq  \frac{k_2}{4\pi}\sqrt{1+  \rho^2 }  \left(1+   \int_{0}^\infty  e^{-qt\lambda^2}  \; {\lambda} ^{2\alpha+2}\;d\lambda \right)\\
&=  \frac{k_2}{4\pi} \sqrt{1+  \rho^2 } \left(1 +    \frac{\Gamma(\alpha+\frac{3}{2})}{2q^{\alpha+\frac{3}{2}}}  t^{-(\alpha+\frac{3}{2})}\right)\\&=C+D~ t^{-(\alpha+\frac{3}{2})},
	\end{align*}
where $C=\frac{k_2}{4\pi} \sqrt{1+  \rho^2 } $ and $D=\frac{k_2}{4\pi} \sqrt{1+  \rho^2 }\; \frac{\Gamma(\alpha+\frac{3}{2})}{2q^{\alpha+\frac{3}{2}}} $.
\end{proof}

	\begin{proposition}\label{eq52}
	Let $q\geq 2$ and $0<a<\frac{1}{q}$. Then 	$	\left\|  |\cdot|^{-a} \chi_{B_r}  \right\|_{L^q(\R, A_{\alpha, \beta})} \leq C~ r^{\frac{1}{q}}e^{{\frac{2\rho r}{q}}},$ where  $C  $ is a positive constant.
	\end{proposition}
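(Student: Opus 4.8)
The plan is to reduce the weighted $L^q$-norm to a completely explicit one-dimensional integral and then apply the two elementary facts already available in the paper: the exponential majorant $A_{\alpha,\beta}(x)\le e^{2\rho|x|}$ used in the proof of Proposition~\ref{eq72}, and the integrability near the origin of $x\mapsto|x|^{-aq}$, which holds precisely because $0<a<\frac1q$.

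First I would unfold the definition of the norm,
$$\left\| |\cdot|^{-a}\chi_{B_r}\right\|_{L^q(\R,A_{\alpha,\beta})}^q=\int_{|x|\le r}|x|^{-aq}\,A_{\alpha,\beta}(x)\,dx,$$
and bound the weight crudely on the truncation set: for $|x|\le r$ one has $A_{\alpha,\beta}(x)=(\sinh|x|)^{2\alpha+1}(\cosh|x|)^{2\beta+1}\le e^{2\rho|x|}\le e^{2\rho r}$, exactly as in Proposition~\ref{eq72}\eqref{eq50}. This gives
$$\left\| |\cdot|^{-a}\chi_{B_r}\right\|_{L^q(\R,A_{\alpha,\beta})}^q\le e^{2\rho r}\int_{|x|\le r}|x|^{-aq}\,dx=2\,e^{2\rho r}\int_0^r x^{-aq}\,dx.$$

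Next, since $0<a<\frac1q$ forces $0<aq<1$, the last integral converges and equals $\dfrac{r^{1-aq}}{1-aq}$, so that
$$\left\| |\cdot|^{-a}\chi_{B_r}\right\|_{L^q(\R,A_{\alpha,\beta})}^q\le \frac{2}{1-aq}\,r^{1-aq}\,e^{2\rho r}.$$
Taking $q$-th roots yields the bound $\left(\frac{2}{1-aq}\right)^{1/q}r^{\frac1q-a}e^{\frac{2\rho r}{q}}$; because $r>1$ we have $r^{-a}\le 1$, hence $r^{\frac1q-a}\le r^{1/q}$, and setting $C:=\left(\frac{2}{1-aq}\right)^{1/q}$ gives the asserted estimate $\left\| |\cdot|^{-a}\chi_{B_r}\right\|_{L^q(\R,A_{\alpha,\beta})}\le C\,r^{1/q}e^{\frac{2\rho r}{q}}$.

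I do not anticipate a genuine obstacle: the only delicate point is that the singularity of $|x|^{-a}$ at the origin must be $L^q$-integrable, which is exactly what the hypothesis $a<\frac1q$ provides, while the restriction $r>1$ is what lets us discard the harmless factor $r^{-a}$. The hypothesis $q\ge2$ is not needed for this particular estimate (any $q\ge1$ would do); it is retained only for consistency with the subsequent applications.
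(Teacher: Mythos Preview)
Your argument is correct. It is slightly more direct than the paper's: the paper splits the integral at $|x|=1$, bounding $A_{\alpha,\beta}(x)\le A_{\alpha,\beta}(1)$ on $[0,1]$ (so that $\int_0^1 x^{-aq}A_{\alpha,\beta}(x)\,dx\le \frac{A_{\alpha,\beta}(1)}{1-aq}$) and bounding $|x|^{-aq}\le 1$ on $[1,r]$ (so that the tail is controlled by $\int_0^r A_{\alpha,\beta}(x)\,dx\le r e^{2\rho r}$ from Proposition~\ref{eq72}\eqref{eq50}), obtaining $C=\big(\frac{2A_{\alpha,\beta}(1)}{1-aq}+2\big)^{1/q}$. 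You instead use the single uniform majorant $A_{\alpha,\beta}(x)\le e^{2\rho r}$ on all of $B_r$ and compute $\int_0^r x^{-aq}\,dx$ in one stroke, then discard the extra factor $r^{-a}$ via $r>1$. Both approaches are elementary; yours avoids the case split at the cost of a cruder bound on $A_{\alpha,\beta}$ away from the origin, while the paper's split keeps the two mechanisms (integrability at $0$ versus exponential growth at infinity) visibly separate. Your remark that $q\ge 2$ is not used here is also accurate.
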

	\begin{proof}
		Using   the relation  (1) of Proposition \ref{eq72}, we get 
		\begin{align*}
		 \left\|  |\cdot|^{-a} \chi_{B_r}  \right\|_{L^q(\R, A_{\alpha, \beta})}^q 
			&=\int_{|x|\leq r} |x|^{-aq}\;A_{\alpha, \beta}(x)~dx\\
			&=2\int_{0}^1  x^{-aq}\;A_{\alpha, \beta}(x)~dx+2 \int_{1}^r x^{-aq}\;A_{\alpha, \beta}(x)~dx\\
&\leq 2\;A_{\alpha, \beta}(1)~\int_{0}^1  x^{-aq}  ~dx+ 2\int_{1}^r A_{\alpha, \beta}(x)~dx\\
&\leq \frac{2\;A_{\alpha, \beta}(1)}{1-aq} +2 \int_{0}^r A_{\alpha, \beta}(x)~dx\\
&\leq \frac{2\;A_{\alpha, \beta}(1)}{1-aq} +2 r e^{2\rho r}\leq  \left( \frac{2\;A_{\alpha, \beta}(1)}{1-aq}+2\right) r e^{2\rho r}.
		\end{align*}
	Therefore, $	\left\|  |\cdot|^{-a} \chi_{B_r}  \right\|_{L^q(\R, A_{\alpha, \beta})} \leq C~ r^{\frac{1}{q}}e^{{\frac{2\rho r}{q}}},$
	where $C= \left( \frac{2\;A_{\alpha, \beta}(1)}{1-aq}+2\right)^{\frac{1}{q}}.$
	\end{proof}

\subsection{Weak type $L^p$-spaces} 
In this subsection, we   recall some basic definitions and properties of the weak type $L^p$-space  on  $\sigma$-finite  measure spaces. The main references for this subsection are  \cite{graf, joha2016,stein}.

Let $(X, \mu)$ be a $\sigma$-finite measure space, $1\leq p,q \leq \infty$ and let $f$ be a measurable function on $X$. We define the norm
$$
\|f\|_{L^{p, q}( X, \mu)}=\left\{\begin{array}{ll}
	\left(\frac{q}{p} \int_{0}^{\infty} t^{q / p-1} f^{*}(t)^{q} d t\right)^{1 / q} & \text {if}~ q<\infty, \\
	\sup _{t>0} t (\lambda_{f}(t))^{1 / p} & \text { if } q=\infty,
\end{array}\right.
$$
where $\lambda_{f}$ is the distribution function of $f$ and $f^{*}$ denotes  the non-increasing rearrangement of $f$, i.e.,
$$
\lambda_{f}(s)=\mu(\{x \in X:|f(x)|>s\}) \quad \text { and } \quad f^{*}(t)=\inf \left\{s: \lambda_{f}(s) \leq t\right\} .
$$
The Lorentz space $L^{p, q}(X, \mu)$ consists of measurable functions $f$ on $X$ for which $\|f\|_{L^{p, q}( X, \mu)}<\infty.$ When $p=q$, then   $L^{p, p}(X, \mu)=L^{p}(X, \mu)$.

	Let $ (X, \mu)$ and $(Y,\nu)$ be   $\sigma$-finite measure spaces, $1\leq p, q<\infty$, and let $T$ be a linear operator
	from $L^p(X, \mu)$ to $L^q(Y, \nu)$.  Then $T$ is said to be a weak type $(p, q)$ or     $(p, q)$-weak operator, if there exists a constant  $C > 0$ such that 
	\begin{align}\label{1001}
		\nu ( \{y\in Y : |T f (y)| > t \}) \leq  \left( C \frac{\|f\|_{L^p(X, \mu)}}{t }\right)^q,
	\end{align}
for all $t > 0$ and   all $ f\in L^p(X, \mu)$,	i.e., $T$ maps $L^p(X, \mu) $ boundedly into $L^{q, \infty}(Y, \nu)$. Moreover, we say that \(T\) is of strong type \((p, q)\) if it is bounded from \(L^{p}(X, \mu)\) to \(L^{q}(Y, \nu) .\) 

The inclusion relation between Lorentz spaces is given in the following proposition. 
\begin{proposition}\cite{graf}\label{eq68}\quad
\begin{enumerate}
	\item  \label{eq36} 	Let $1\leq p<\infty$ and $1\leq q<r\leq \infty$. Then   $L^{p,q}( X, \mu)\subset L^{p,r}( X, \mu)$ and consequently, there exists a constant  $C_{p,q,r}>0$ such that  $$\|f\|_{L^{p, r}( X, \mu)}\leq C_{p,q,r}\|f\|_{L^{p,q}( X, \mu)}.$$
	\item \label{eq35}
	For any $1\leq p<\infty$ and any  $f\in L^p(X, \mu)$, we have $\|f\|_{L^{p, \infty}( X, \mu)}\leq \|f\|_{L^{p}( X, \mu)}$.
\end{enumerate}

\end{proposition}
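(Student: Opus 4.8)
The two assertions are classical --- they can be found, for instance, in the reference \cite{graf} already cited --- and the argument I would give rests entirely on the monotonicity of the non-increasing rearrangement $f^*$.

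\textbf{Step 1: a pointwise bound on $f^*$.} First I would record the elementary identity $\|f\|_{L^{p,\infty}(X,\mu)}=\sup_{t>0}t^{1/p}f^*(t)$, which expresses that $\lambda_f$ and $f^*$ are generalized inverses of one another. The crucial observation is that, since $f^*$ is non-increasing, for every $t>0$ and every finite exponent $q$,
$$
t^{q/p}f^*(t)^q=\frac{q}{p}\int_0^t s^{q/p-1}\,ds\;f^*(t)^q\le\frac{q}{p}\int_0^t s^{q/p-1}f^*(s)^q\,ds\le\|f\|_{L^{p,q}(X,\mu)}^q ,
$$
so that $t^{1/p}f^*(t)\le\|f\|_{L^{p,q}(X,\mu)}$ for all $t>0$, and hence $\|f\|_{L^{p,\infty}(X,\mu)}\le\|f\|_{L^{p,q}(X,\mu)}$. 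This already settles the case $r=\infty$ of part (1), with constant $1$.

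\textbf{Step 2: the case $q<r<\infty$.} Here I would split $f^*(t)^r=f^*(t)^{r-q}f^*(t)^q$ and bound the first factor by Step 1, namely $f^*(t)^{r-q}\le(t^{-1/p}\|f\|_{L^{p,q}})^{r-q}$. Then
$$
\|f\|_{L^{p,r}}^r=\frac{r}{p}\int_0^\infty t^{r/p-1}f^*(t)^r\,dt\le\frac{r}{p}\|f\|_{L^{p,q}}^{r-q}\int_0^\infty t^{q/p-1}f^*(t)^q\,dt=\frac{r}{q}\|f\|_{L^{p,q}}^r ,
$$
since the exponent $r/p-1-(r-q)/p$ collapses to $q/p-1$. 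Hence $\|f\|_{L^{p,r}}\le(r/q)^{1/r}\|f\|_{L^{p,q}}$, which, combined with Step 1, proves part (1) with $C_{p,q,r}=(r/q)^{1/r}$ (read as $1$ when $r=\infty$).

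\textbf{Step 3: part (2).} Since $L^{p,p}(X,\mu)=L^p(X,\mu)$ with equality of norms (as noted above), part (2) is just the case $q=p$, $r=\infty$ of Step 1; alternatively, it is immediate from Chebyshev's inequality $\lambda_f(t)\le t^{-p}\|f\|_{L^p}^p$, which gives $t(\lambda_f(t))^{1/p}\le\|f\|_{L^p}$ for every $t>0$.

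\textbf{Where the care goes.} I do not expect any substantial obstacle here: the only points requiring attention are the identification of $\|\cdot\|_{L^{p,\infty}}$ with $\sup_{t>0}t^{1/p}f^*(t)$ and the degenerate situations ($q=1$, or $f^*$ equal to $+\infty$ on an initial interval, in which cases the inequalities hold trivially), together with the fact that the constant in part (1) is genuinely $(r/q)^{1/r}$ and tends to $1$ only as $r\to\infty$.
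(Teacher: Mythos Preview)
Your proposal is correct; the argument via the pointwise bound $t^{1/p}f^*(t)\le\|f\|_{L^{p,q}}$ obtained from the monotonicity of $f^*$, followed by the splitting $f^*(t)^r=f^*(t)^{r-q}f^*(t)^q$, is exactly the standard proof (and yields the sharp constant $(r/q)^{1/r}$). The paper itself gives no proof of this proposition --- it is stated with a citation to \cite{graf} --- so there is nothing to compare your approach against beyond noting that it coincides with the classical argument in that reference.
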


The following result on the pointwise product of two functions in Lorentz spaces is due to O'Neil \cite{neil}.

\begin{theorem} \label{neil}
	Let $q \in(2, \infty)$ and set $r=\frac{q}{q-2}$. If $g \in L^{q}(X,\mu)$ and $h \in L^{r, \infty}(X, \mu)$, then  $ gh\in L^{q^{\prime}, q}(X, \mu)$ with $$\|g h\|_{L^{q', q}(X, \mu)} \leq\|g\|_{L^{ q}(X, \mu)}\|h\|_{L^{r, \infty}(X, \mu)} .$$
\end{theorem}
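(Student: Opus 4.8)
The plan is to reduce everything to one-dimensional estimates on the non-increasing rearrangements. By definition,
$$\|gh\|_{L^{q',q}(X,\mu)}^q = \frac{q}{q'}\int_0^\infty t^{q/q'-1}\,(gh)^*(t)^q\,dt,$$
so the task is to control $(gh)^*$ in terms of $g^*$ and $h^*$. The first step is the Hardy--Littlewood type estimate
$$\int_0^t (gh)^*(s)\,ds \le \int_0^t g^*(s)\,h^*(s)\,ds, \qquad t>0,$$
which I would derive from the variational formula $\int_0^t F^*(s)\,ds = \sup\{\int_E |F|\,d\mu : \mu(E)\le t\}$ together with the classical Hardy--Littlewood inequality $\int_X |uv|\,d\mu \le \int_0^\infty u^*(s)v^*(s)\,ds$ applied to $u=g\chi_E$ and $v=h\chi_E$, using that $(g\chi_E)^*\le g^*$ and is supported in $[0,\mu(E)]$.

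Next, since $h\in L^{r,\infty}$ one has the pointwise bound $h^*(s)\le \|h\|_{L^{r,\infty}(X,\mu)}\,s^{-1/r}$, and since $(gh)^*$ is non-increasing, $(gh)^*(t)\le \frac{1}{t}\int_0^t (gh)^*(s)\,ds$. Combining this with the previous step gives
$$(gh)^*(t) \le \frac{\|h\|_{L^{r,\infty}(X,\mu)}}{t}\int_0^t g^*(s)\,s^{-1/r}\,ds.$$
Inserting this into the Lorentz functional and noting that $q/q'-1-q=-2$, one is left with $\int_0^\infty t^{-2}\bigl(\int_0^t g^*(s)s^{-1/r}\,ds\bigr)^q\,dt$. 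I would then invoke Hardy's inequality in the form $\int_0^\infty t^{-2}\bigl(\int_0^t \psi(s)\,ds\bigr)^q\,dt \le q^q\int_0^\infty (s\psi(s))^q s^{-2}\,ds$, valid since the relevant weight exponent lies in the admissible range. The crucial point is that the powers of $s$ collapse exactly because $1/r=(q-2)/q$: one computes $q-q/r-2=0$, so the right-hand side becomes a constant multiple of $\int_0^\infty g^*(s)^q\,ds=\|g\|_{L^q(X,\mu)}^q$. This yields $\|gh\|_{L^{q',q}(X,\mu)}\le C_q\,\|g\|_{L^q(X,\mu)}\,\|h\|_{L^{r,\infty}(X,\mu)}$.

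The main obstacle is the constant. The argument just sketched produces a $C_q$ depending on $q$ (from Hardy's inequality and the normalization of the Lorentz functional), whereas the statement asserts $C_q=1$; pinning down the sharp constant is precisely O'Neil's refinement and requires more care. A cleaner route that makes the sharp index bookkeeping transparent is interpolation: the elementary weak-type Hölder bound $\|gh\|_{L^{b,\infty}}\le C\,\|g\|_{L^a}\,\|h\|_{L^{r,\infty}}$ with $1/b=1/a+1/r$ holds for every $a$, and applying the real interpolation functor $(\cdot,\cdot)_{\theta,q}$ between two exponents $a_0<q<a_1$ sends $L^q=(L^{a_0},L^{a_1})_{\theta,q}$ into $(L^{b_0,\infty},L^{b_1,\infty})_{\theta,q}=L^{q',q}$, since $1/q'=1/q+1/r$. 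For the applications in the present paper, any finite constant is enough, so I would be content with the self-contained rearrangement argument and cite \cite{neil} for the sharp form with constant one.
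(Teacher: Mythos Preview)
The paper does not prove this theorem at all: it is stated as a known result due to O'Neil and simply cited as \cite{neil}, with no argument given. So there is no ``paper's own proof'' to compare your proposal against.

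That said, your rearrangement argument is a correct and standard route to the inequality up to a constant: the chain $(gh)^*(t)\le t^{-1}\int_0^t g^*(s)h^*(s)\,ds$, the bound $h^*(s)\le \|h\|_{L^{r,\infty}}s^{-1/r}$, and the weighted Hardy inequality do combine as you describe, and the exponent bookkeeping $q-q/r-2=0$ is exactly right. Your own caveat about the constant is also accurate: this approach yields $C_q>1$, and obtaining the sharp form requires O'Neil's original argument. Since the paper only \emph{uses} the result (in the Hardy--Littlewood theorem for the Opdam--Cherednik transform) and the constant there is absorbed into an unspecified $C_q$, your version with a finite constant would serve equally well for that application.
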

The interpolation result between Lorentz spaces is given in the following theorem and can be found in  \cite{stein}.
\begin{theorem} \label{eq69}
	 Suppose $T$ is a subadditive operator of (restricted) weak types $\left(r_{j}, p_{j}\right), j=0,1$, with $r_{0}<r_{1}$ and $p_{0} \neq p_{1}$, then there exists a constant $B=B_{\theta}$ such that $\|Tf\|_{L^{p, q}(X, \mu) } \leq B\|f\|_{L^{r, q}(X, \mu) }$ for all $f$ belonging to the domain of $T$ and to $L^{r, q}(X, \mu)$, where $1 \leq q \leq \infty$,
	$$
	\frac{1}{p}=\frac{1-\theta}{p_{0}}+\frac{\theta}{p_{1}}, \quad \frac{1}{r}=\frac{1-\theta}{r_{0}}+\frac{\theta}{r_{1}} \quad \text { and } \quad 0<\theta<1 .
	$$
\end{theorem}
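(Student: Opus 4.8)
The statement is the Marcinkiewicz--Hunt interpolation theorem in the Lorentz scale, and since it is classical I would simply invoke \cite{stein}; for completeness, here is the route one would take to prove it directly. First I would treat $q=\infty$ separately (there the argument degenerates essentially to the weak-type hypotheses themselves) and assume $1\le q<\infty$. Because $T$ is subadditive, $|T(f_1+f_2)|\le |Tf_1|+|Tf_2|$, so it is enough to prove $\|Tf\|_{L^{p,q}(Y,\nu)}\le B\|f\|_{L^{r,q}(X,\mu)}$ for $f$ a simple function supported on a set of finite measure, such functions being dense in $L^{r,q}(X,\mu)$ when $q<\infty$. Decomposing such an $f$ along its dyadic level sets $\{2^{k}<|f|\le 2^{k+1}\}$ and using subadditivity once more reduces the whole estimate to the action of $T$ on finite linear combinations of characteristic functions, which is exactly where the restricted weak-type bounds $\nu(\{\,|T\chi_E|>\alpha\,\})\le (C_j\,\mu(E)^{1/r_j}/\alpha)^{p_j}$ are available.

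The core of the proof is the usual splitting at a height that varies with the point at which one evaluates $(Tf)^{*}$. For $t>0$ write $f=g_t+b_t$, where $b_t=f\,\chi_{\{|f|>f^{*}(t^{\kappa})\}}$ is the ``large-value'' part and $g_t=f-b_t$; the exponent $\kappa$ is dictated by the relations defining $p$, $r$, $\theta$ and is chosen so that $b_t\in L^{r_0}(X,\mu)$ and $g_t\in L^{r_1}(X,\mu)$ with norms that can be written as truncated integrals of $f^{*}$. Subadditivity gives $\lambda_{Tf}(2\alpha)\le \lambda_{Tb_t}(\alpha)+\lambda_{Tg_t}(\alpha)$; inserting the weak-type $(r_0,p_0)$ bound on the first term and the weak-type $(r_1,p_1)$ bound on the second, and then optimizing in the splitting height, yields a pointwise estimate of $(Tf)^{*}(t)$ by a Hardy-type average of $f^{*}$ over $(0,t)$ plus a Hardy-type average of $f^{*}$ over $(t,\infty)$. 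Substituting this into $\|Tf\|_{L^{p,q}}^{q}=\frac{q}{p}\int_{0}^{\infty}t^{q/p-1}(Tf)^{*}(t)^{q}\,dt$ and applying the two one-dimensional Hardy inequalities bounds the right-hand side by $B^{q}\|f\|_{L^{r,q}}^{q}$, with $B=B_{\theta}$.

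I expect the main obstacle to be the exponent bookkeeping rather than any genuine analytic difficulty: one must check that the powers of $t$ produced after inserting the weak-type bounds make both Hardy integrals converge, and this is precisely where the hypotheses $r_{0}<r_{1}$ and $p_{0}\neq p_{1}$ enter, forcing those exponents to have the right sign and keeping $B_{\theta}$ finite for $0<\theta<1$. A secondary point needing care, since only the \emph{restricted} weak type is assumed, is that the reduction to characteristic functions has to be carried through both endpoints simultaneously without degrading the interpolated constant. As this is entirely standard and plays only an auxiliary role in what follows, in the paper I would simply cite \cite{stein} for the statement.
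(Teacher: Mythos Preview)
Your proposal is correct and matches the paper's treatment: the paper does not prove this statement at all but simply quotes it from \cite{stein}, exactly as you suggest doing. Your additional sketch of the Marcinkiewicz--Hunt argument is accurate and standard, but it goes beyond what the paper itself contains, since Theorem~\ref{eq69} is stated there purely as a cited auxiliary result.
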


\section{Weighted Norm Inequalities for the Opdam--Cherednik transform}\label{sec3}
In this section, we prove  several weighted norm inequalities for the Opdam--Cherednik transform. First, we establish a version of the Hardy--Littlewood inequality for the Opdam--Cherednik transform. To obtain the result, we specialize the definition of Orlicz-type space, given in \cite{joha2016}, to the case of the measure $A_{\alpha, \beta} $.

\begin{definition} 
Let  $A_{\alpha, \beta}$ be a positive measure on $\R$ given in (\ref{eq67}).  A Young function   is a measurable function $\phi: \mathbb{R}  \rightarrow \mathbb{R}$ with $A_{\alpha, \beta}\left(\left\{x \in \mathbb{R} :|\phi(x)| \leq t\right\}\right) \leq  C t$
	for all $t>0$ and for some constant $C>0$. Given such a function $\phi$,   let  $L_{\phi}^{(p)} (\mathbb{R}, A_{\alpha, \beta}  ), 2<p<\infty$, denote the Orlicz-type space of measurable functions $f$ on $\mathbb{R}$ for which
	$$
	\|f\|_{L_{\phi}^{(p)}\left(\mathbb{R}, A_{\alpha, \beta} \right)}:=\left(\int_{\mathbb{R} }|f(x)|^{p}|\phi(x)|^{p-2} \;A_{\alpha, \beta}(x)dx\right)^{1 / p}<\infty,
	$$
	 i.e., $f\in L_{\phi}^{(p)}\left(\mathbb{R}, A_{\alpha, \beta} \right)$ if and only if $f \phi^{1-\frac{2}{p}}\in L^{p}\left(\mathbb{R}, A_{\alpha, \beta}\right)$.	
\end{definition}

\begin{theorem}
	Let $q>2$ and $f \in L_{\phi}^{(q)}\left(\mathbb{R},  A_{\alpha, \beta}\right)$, where $\phi$ is a Young function relative to $ A_{\alpha, \beta} .$ There exists a   constant $C_{q}>0$  such that
$$
\int_{\mathbb{R} }\left|\H_{\alpha, \beta}f (\lambda)\right|^{q} d| \sigma_{\alpha, \beta} |(\lambda) \leq C_{q}^{q}	\|f\|_{L_{\phi}^{(q)}\left(\mathbb{R}, A_{\alpha, \beta} \right)}^{q}.
$$
\end{theorem}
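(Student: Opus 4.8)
The plan is to establish the equivalent inequality
$$
\|\H_{\alpha, \beta} f\|_{L^q(\R, \sigma_{\alpha, \beta})} \le C_q\, \|f\|_{L_{\phi}^{(q)}(\R, A_{\alpha, \beta})}
$$
by the usual Lorentz-space route for Hardy--Littlewood estimates: first use O'Neil's product inequality (Theorem~\ref{neil}) to place $f$ in the intermediate Lorentz space $L^{q',q}(\R, A_{\alpha, \beta})$, and then push $\H_{\alpha, \beta}$ through that space by interpolating (Theorem~\ref{eq69}) the Hausdorff--Young bounds (\ref{eq14}) and (\ref{eq66}). Throughout, write $q'=q/(q-1)$ for the conjugate exponent, $\theta_0 = 1-\tfrac2q\in(0,1)$, and $r = q/(q-2) = 1/\theta_0$, so that $\tfrac1{q'} = \tfrac1q + \tfrac1r$.

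First I would check that $h := |\phi|^{-\theta_0}$ lies in $L^{r,\infty}(\R, A_{\alpha, \beta})$. Indeed, by the defining property of a Young function relative to $A_{\alpha, \beta}$,
$$
\lambda_h(s) = A_{\alpha, \beta}\big(\{x:|\phi(x)|^{-\theta_0}>s\}\big) = A_{\alpha, \beta}\big(\{x:|\phi(x)|<s^{-1/\theta_0}\}\big) \le C\,s^{-1/\theta_0} = C\,s^{-r}
$$
for every $s>0$, whence $\|h\|_{L^{r,\infty}(\R, A_{\alpha, \beta})} = \sup_{s>0} s\,\lambda_h(s)^{1/r} \le C^{\theta_0}$ (letting $t\to 0^{+}$ in the Young condition also shows $\phi\neq 0$ $A_{\alpha, \beta}$-a.e.). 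By the definition of the Orlicz-type space, $g := f\,|\phi|^{\theta_0} = f\,\phi^{1-2/q}$ belongs to $L^q(\R, A_{\alpha, \beta})$ with $\|g\|_{L^q(\R, A_{\alpha, \beta})} = \|f\|_{L_{\phi}^{(q)}(\R, A_{\alpha, \beta})}$, and $f = gh$ a.e. Applying Theorem~\ref{neil} on $(\R, A_{\alpha, \beta}(x)dx)$ with this $q$ and $r=q/(q-2)$ yields $f\in L^{q',q}(\R, A_{\alpha, \beta})$ together with
$$
\|f\|_{L^{q',q}(\R, A_{\alpha, \beta})} \le \|g\|_{L^q(\R, A_{\alpha, \beta})}\,\|h\|_{L^{r,\infty}(\R, A_{\alpha, \beta})} \le C^{\theta_0}\,\|f\|_{L_{\phi}^{(q)}(\R, A_{\alpha, \beta})}.
$$

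It remains to prove that $\H_{\alpha, \beta}$ maps $L^{q',q}(\R, A_{\alpha, \beta})$ boundedly into $L^q(\R, \sigma_{\alpha, \beta})=L^{q,q}(\R, \sigma_{\alpha, \beta})$. By (\ref{eq66}), $\H_{\alpha, \beta}$ is of strong, hence weak, type $(1,\infty)$, and by (\ref{eq14}), for any fixed $s\in(q',2)$ (such $s$ exists because $q>2$ forces $q'<2$) it is of strong, hence weak, type $(s,s')$ with $s'\in(2,q)$. Taking $\theta = s'/q\in(0,1)$ one checks
$$
\frac1q = \frac{\theta}{s'}, \qquad \frac1{q'} = 1 - \theta\Big(1-\frac1s\Big) = (1-\theta) + \frac{\theta}{s},
$$
so Theorem~\ref{eq69}, applied with second Lorentz index equal to $q$, produces a constant $B=B_q>0$ with $\|\H_{\alpha, \beta} f\|_{L^{q,q}(\R, \sigma_{\alpha, \beta})} \le B\,\|f\|_{L^{q',q}(\R, A_{\alpha, \beta})}$; the domain issue is harmless since $L^{q'}(\R, A_{\alpha, \beta})\subset L^{q',q}(\R, A_{\alpha, \beta})$ by Proposition~\ref{eq68} and, the second index being finite, $C_c(\R)$ is dense in $L^{q',q}(\R, A_{\alpha, \beta})$, so $\H_{\alpha, \beta}$ extends there consistently. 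Chaining this with the previous display and raising to the $q$-th power gives the assertion with $C_q = B\,C^{\theta_0}$ (and $C=1$ if the Young function is normalized).

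The conceptual heart is O'Neil's inequality: the Orlicz-type norm $\|f\|_{L_{\phi}^{(q)}}$ is designed exactly so that $f$ factors as an $L^q$ function times the weak-type weight $|\phi|^{-\theta_0}\in L^{r,\infty}$, which is precisely the hypothesis Theorem~\ref{neil} consumes to land the product in $L^{q',q}$. The step requiring the most care is the interpolation: one has to keep the second Lorentz index equal to $q$ on both the source and the target sides (so that $L^{q,q}$ collapses to $L^q$), and this is what pins the parameter down to $\theta=s'/q$ and forces the source's first index to come out as exactly $q'$; the remaining ingredients — the distribution-function estimate for $|\phi|^{-\theta_0}$ and the density argument for the domain — are routine.
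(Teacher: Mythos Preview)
Your proof is correct and follows essentially the same route as the paper: O'Neil's product inequality places $f=gh$ in $L^{q',q}(\R,A_{\alpha,\beta})$ via the Young-function distributional estimate on $h=|\phi|^{-(1-2/q)}$, and Stein--Weiss interpolation pushes $\H_{\alpha,\beta}$ from $L^{q',q}$ to $L^{q,q}=L^q$. The only cosmetic differences are that the paper interpolates between the endpoints $(1,\infty)$ and $(2,2)$ (invoking Plancherel) rather than your $(1,\infty)$ and $(s,s')$ with $s\in(q',2)$ (invoking Hausdorff--Young~(\ref{eq14})), and it runs the argument first for simple functions before passing to the closure; both variants land on the same $L^{q',q}\to L^q$ bound.
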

\begin{proof}
  Assume that $f$ is a simple function  on $\R.$  Let $T f(\lambda)=\H_{\alpha, \beta} f (\lambda), \lambda\in \R$. Then, using the relation (\ref{eq66}), we get $$\|T f\|_{L^{\infty, \infty}( \R,   \sigma_{\alpha, \beta})}=\|T f\|_{L^{ \infty}(\R,   \sigma_{\alpha, \beta})}\leq \|f\|_{L^{ 1}( \R,   A_{\alpha, \beta})}=\|f\|_{L^{ 1,1}( \R,   A_{\alpha, \beta})}.$$
Also, using   the Plancherel formula (\ref{eq03}) and  Proposition \ref{eq68},  we get 
  $$\|T f\|_{L^{2, \infty}( \R,   \sigma_{\alpha, \beta})}\leq \|T f\|_{L^{2, 2}( \R,   \sigma_{\alpha, \beta})}=\|T f\|_{L^{ 2}(\R,   \sigma_{\alpha, \beta})}=\|f\|_{L^{ 2}( \R,   A_{\alpha, \beta})}\leq \|f\|_{L^{ 2,1}( \R,   A_{\alpha, \beta})}.$$
  Therefore, from   Theorem \ref{eq69}, we obtain $\|T f\|_{L^{ q,q}(\R,   \sigma_{\alpha, \beta})}\leq\|f\|_{L^{ q',q}( \R,   A_{\alpha, \beta})}$.
Now, we  consider a function  $g(x)=f(x) \phi(x)^{1-\frac{2}{q}}.$ Then by hypothesis, $g\in L^{q}\left(\mathbb{R},  A_{\alpha, \beta}\right)$.   
Since $\phi$ is a Young function, we have 
$$
A_{\alpha, \beta}\left(\left\{x \in \mathbb{R} :|\phi(x)|^{\frac{2}{q}-1}>t\right\}\right)=A_{\alpha, \beta}\left(\left\{x \in \mathbb{R} :|\phi(x)|^{1-\frac{2}{q}}<\frac{1}{t}\right\}\right) \leq C t^{-\frac{q}{q-2}},
$$
and consequently $\phi^{\frac{2}{q}-1}\in L^{ r,\infty}( \R,   A_{\alpha, \beta})$, where $r=\frac{q}{q-2}.$ Now, using  O'Neil's theorem \ref{neil},  we obtain 
$$
\begin{aligned}
\left( \int_{\mathbb{R} }\left|\H_{\alpha, \beta} (f) (\lambda)\right|^{q} d \sigma_{\alpha, \beta} (\lambda) \right)^{\frac{1}{q}}& \leq\|f\|_{L^{ q',q}( \R,   A_{\alpha, \beta})} \leq\|g\|_{L^{ q}( \R,   A_{\alpha, \beta})}\big\|\phi^{\frac{2}{q}-1} \big\|_{L^{ r,\infty}( \R,   A_{\alpha, \beta})} \\
	& \leq C_q \left(\int_{\mathbb{R} }|f(x)|^{q}|\phi(x)|^{q-2} \;A_{\alpha, \beta}(x)dx\right)^{\frac{1}{q}}=C_q \|f\|_{L_{\phi}^{(q)}\left(\mathbb{R}, A_{\alpha, \beta} \right)}.
\end{aligned}
$$
Thus, the desired conclusion holds for simple functions. Now, using a standard density argument we extend the result for general functions in $L_{\phi}^{(q)}\left(\mathbb{R}, A_{\alpha, \beta} \right)$. This completes the proof of the theorem.
\end{proof}
Next, we obtain a version of the HPW  inequality for the Opdam--Cherednik transform. To prove the result,  we use the fact that 	the differential-difference operator $ \Delta_{\alpha, \beta}-|x|^{2}$ is an essentially self-adjoint operator on $L^2(\R, A_{\alpha, \beta})$. The operator $ \Delta_{\alpha, \beta} - |x|^{2}$ has a discrete spectrum. Also, for every $\lambda \in \R$ and every $f\in L^2(\R, A_{\alpha, \beta}),$
we have 
\begin{align}\label{eq70}
	\H_{\alpha, \beta} (\Delta_{\alpha, \beta} f)(\lambda)=- |\lambda|^2\H_{\alpha, \beta} f(\lambda).
\end{align} 
For a more detailed study on  the operator $\Delta_{\alpha, \beta}$,  we refer to   \cite{sch08,mej2016}.  We begin with the following additive inequality.
\begin{lemma}\label{eq007}
	Let $f\in L^2(\R, A_{\alpha, \beta})$, then 
	$$\|| \cdot | f\|_{L^2(\R, A_{\alpha, \beta})}^2+\||\cdot|\H_{\alpha, \beta} f\|_{L^2(\R, \sigma_{\alpha, \beta})}^2\geq  \lambda_{\min}{(|x|^2- \Delta_{\alpha, \beta} )  }\| f\|_{L^2(\R, A_{\alpha, \beta})}^2,
	$$
	where $ \lambda_{\min}{(|x|^2- \Delta_{\alpha, \beta} )  }$ is the minimum eigenvalue of the operator $|x|^2- \Delta_{\alpha, \beta} $.
\end{lemma}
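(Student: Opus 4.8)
The plan is to recognize the left-hand side as the quadratic form of the operator $|x|^2-\Delta_{\alpha,\beta}$ evaluated at $f$, and then to bound that quadratic form below by its lowest eigenvalue. We may assume the left-hand side is finite, since otherwise there is nothing to prove; thus $|\cdot|f\in L^2(\R,A_{\alpha,\beta})$ and $|\cdot|\H_{\alpha,\beta}f\in L^2(\R,\sigma_{\alpha,\beta})$, which places $f$ in the (form) domain of $|x|^2-\Delta_{\alpha,\beta}$. Writing $\langle\cdot,\cdot\rangle$ for the inner product of $L^2(\R,A_{\alpha,\beta})$, the first term is immediate:
$$\||\cdot|f\|_{L^2(\R,A_{\alpha,\beta})}^2=\int_{\R}|x|^2|f(x)|^2A_{\alpha,\beta}(x)\,dx=\langle|x|^2f,f\rangle.$$

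For the second term I would use the intertwining identity \eqref{eq70}, namely $|\lambda|^2\H_{\alpha,\beta}f(\lambda)=-\H_{\alpha,\beta}(\Delta_{\alpha,\beta}f)(\lambda)$, together with the Plancherel formula \eqref{eq03}: since $\H_{\alpha,\beta}$ is isometric from $L^2(\R,A_{\alpha,\beta})$ onto $L^2(\R,\sigma_{\alpha,\beta})$ (in the appropriate polarized sense) and diagonalizes $-\Delta_{\alpha,\beta}$ as multiplication by $|\lambda|^2$, one gets
$$\||\cdot|\H_{\alpha,\beta}f\|_{L^2(\R,\sigma_{\alpha,\beta})}^2=\int_{\R}|\lambda|^2|\H_{\alpha,\beta}f(\lambda)|^2\,d\sigma_{\alpha,\beta}(\lambda)=\langle-\Delta_{\alpha,\beta}f,f\rangle.$$
Adding the two identities gives
$$\||\cdot|f\|_{L^2(\R,A_{\alpha,\beta})}^2+\||\cdot|\H_{\alpha,\beta}f\|_{L^2(\R,\sigma_{\alpha,\beta})}^2=\langle(|x|^2-\Delta_{\alpha,\beta})f,f\rangle.$$

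Finally, I would invoke the stated structural facts about $|x|^2-\Delta_{\alpha,\beta}$: it is essentially self-adjoint on $L^2(\R,A_{\alpha,\beta})$, it has no continuous spectrum, and its spectrum is discrete, so it possesses a smallest eigenvalue $\lambda_{\min}(|x|^2-\Delta_{\alpha,\beta})$. The spectral theorem (equivalently, the Rayleigh--Ritz variational principle) then yields $\langle(|x|^2-\Delta_{\alpha,\beta})f,f\rangle\ge\lambda_{\min}(|x|^2-\Delta_{\alpha,\beta})\,\|f\|_{L^2(\R,A_{\alpha,\beta})}^2$, which is exactly the claim. The main point requiring care is the second displayed identity: one must verify that the finiteness hypothesis genuinely puts $f$ in the form domain of $|x|^2-\Delta_{\alpha,\beta}$, and one must handle the $\check f$-twist (and the signed nature of $\sigma_{\alpha,\beta}$) in the Plancherel formula \eqref{eq03} when passing from $\int|\lambda|^2|\H_{\alpha,\beta}f|^2\,d\sigma_{\alpha,\beta}$ to $\langle-\Delta_{\alpha,\beta}f,f\rangle$; once that quadratic-form identity is secured, the remaining inequality is the standard lower bound for a semibounded self-adjoint operator.
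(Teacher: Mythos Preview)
Your proof is correct and follows essentially the same approach as the paper: rewrite the second term via \eqref{eq70} and Plancherel as $-\langle\Delta_{\alpha,\beta}f,f\rangle$, add to obtain the quadratic form $\langle(|x|^2-\Delta_{\alpha,\beta})f,f\rangle$, and then invoke the discreteness of the spectrum for the lower bound. If anything, you are more careful than the paper about the form-domain and the $\check f$-twist in \eqref{eq03}, which the paper simply glosses over.
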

\begin{proof}	Using   the Plancherel formula (\ref{eq03}) and the relation (\ref{eq70}), we get 
\begin{align*}
	\||\cdot|\H_{\alpha, \beta} f\|_{L^2(\R, \sigma_{\alpha, \beta})}^2&=\langle |\cdot |^2\H_{\alpha, \beta} f, \H_{\alpha, \beta} f\rangle_{L^2(\R, \sigma_{\alpha, \beta})}\\
	&=-\langle  \H_{\alpha, \beta} (\Delta_{\alpha, \beta}f), \H_{\alpha, \beta} f\rangle_{L^2(\R, \sigma_{\alpha, \beta})}\\
	&=-\langle   \Delta_{\alpha, \beta} f,   f\rangle_{L^2(\R, A_{\alpha, \beta})}.
\end{align*}
Therefore 
\begin{align*}
\|| \cdot | f\|_{L^2(\R, A_{\alpha, \beta})}^2+	\||\cdot|\H_{\alpha, \beta} f\|_{L^2(\R, \sigma_{\alpha, \beta})}^2
	&=\langle  (| \cdot |^2- \Delta_{\alpha, \beta})f,   f\rangle_{L^2(\R, A_{\alpha, \beta})}.
\end{align*}
Since the self-adjoint operator $|x|^2- \Delta_{\alpha, \beta}$    has only discrete spectra, we obtain
\begin{align*}
\|| \cdot | f\|_{L^2(\R, A_{\alpha, \beta})}^2+\||\cdot|\H_{\alpha, \beta} f\|_{L^2(\R, \sigma_{\alpha, \beta})}^2\geq \lambda_{\min}{(|x|^2- \Delta_{\alpha, \beta} )  } \| f\|_{L^2(\R, A_{\alpha, \beta})}^2,
\end{align*}
where $ \lambda_{\min}{(|x|^2- \Delta_{\alpha, \beta} )  }$ is the minimum eigenvalue of the operator $|x|^2- \Delta_{\alpha, \beta} $. This completes the proof.
\end{proof}

\begin{theorem}\label{eq13}
	Let $f\in L^2(\R, A_{\alpha, \beta})$. Then  there exists a constant $ C( \alpha, \beta  )>0$ such that 
	$$\|| \cdot | f\|_{L^2(\R, A_{\alpha, \beta})} \; \||\cdot|\H_{\alpha, \beta} f\|_{L^2(\R, \sigma_{\alpha, \beta})} \geq  C( \alpha, \beta   ) \| f\|_{L^2(\R, A_{\alpha, \beta})}^2.
	$$
\end{theorem}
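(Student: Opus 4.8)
The plan is to upgrade the additive estimate of Lemma~\ref{eq007} to the stated multiplicative one by inserting a scaling parameter, in the spirit of Ciatti--Ricci--Sundari \cite{cia} and Soltani \cite{soltani}. First dispose of the degenerate cases: if $\|\,|\cdot|f\,\|_{L^2(\R,A_{\alpha,\beta})}=0$ then $f=0$ a.e.\ and there is nothing to prove, and likewise if $\|\,|\cdot|\H_{\alpha,\beta}f\,\|_{L^2(\R,\sigma_{\alpha,\beta})}=0$; so assume both are finite and positive and abbreviate $P:=\|\,|\cdot|f\,\|_{L^2(\R,A_{\alpha,\beta})}^{2}$, $Q:=\|\,|\cdot|\H_{\alpha,\beta}f\,\|_{L^2(\R,\sigma_{\alpha,\beta})}^{2}$. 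Since $A_{\alpha,\beta}$ is not homogeneous and the Opdam functions $G^{\alpha,\beta}_\lambda$ carry no dilation symmetry, one cannot simply replace $f$ by $f(t\,\cdot)$; instead I would introduce the parameter at the level of the operator. For each $t>0$ the operator $t^{2}|x|^{2}-\Delta_{\alpha,\beta}$ is again essentially self-adjoint on $L^2(\R,A_{\alpha,\beta})$ with purely discrete spectrum (the argument for $t=1$ recalled from \cite{sch08,mej2016} applies unchanged), and exactly the computation in the proof of Lemma~\ref{eq007}, using the Plancherel formula \eqref{eq03} and \eqref{eq70}, gives
\[
\big\langle\,(t^{2}|x|^{2}-\Delta_{\alpha,\beta})f,\;f\,\big\rangle_{L^2(\R,A_{\alpha,\beta})}=t^{2}P+Q .
\]
Hence, writing $\nu(t):=\lambda_{\min}\!\big(t^{2}|x|^{2}-\Delta_{\alpha,\beta}\big)>0$, we get $t^{2}P+Q\ge \nu(t)\,\|f\|_{L^2(\R,A_{\alpha,\beta})}^{2}$ for every $t>0$, and, running the same computation with $\Delta_{\alpha,\beta}$ replaced by $t^{2}\Delta_{\alpha,\beta}$, the symmetric inequality $P+t^{2}Q\ge t^{2}\nu(1/t)\,\|f\|_{L^2(\R,A_{\alpha,\beta})}^{2}$.

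The crux, and the step I expect to be genuinely hard, is the quantitative confinement estimate: there is $c=c(\alpha,\beta)>0$ with $\nu(t)\ge c\,t$ for all $t>0$ (equivalently $t^{2}\nu(1/t)\ge c\,t$ for $t\ge 1$). I would derive it from a local uncertainty (Faber--Krahn) inequality for $\H_{\alpha,\beta}$, together with its frequency-side dual: splitting $\|f\|_{L^2(\R,A_{\alpha,\beta})}^{2}$ over $\{|x|>R\}$ and $B_R$ with $R\sim t^{-1/2}$, the outer part satisfies $\int_{|x|>R}|f|^{2}A_{\alpha,\beta}\le R^{-2}P$, while the inner part is controlled by $\int_{B_R}|f|^{2}A_{\alpha,\beta}\lesssim R^{2}\,\|\,|\cdot|\H_{\alpha,\beta}f\,\|_{L^2(\R,\sigma_{\alpha,\beta})}^{2}$ — the assertion that a function concentrated in a small ball cannot have arbitrarily small frequency-side energy — which one proves from the pointwise bound \eqref{eq1} on $G^{\alpha,\beta}_\lambda$ and the estimate \eqref{eq60} on $C_{\alpha,\beta}$, or by transplanting the Heckman--Opdam argument of \cite{john16}; adding the two pieces and multiplying by $t$ yields $t^{2}P+Q\ge c\,t\,\|f\|_{L^2(\R,A_{\alpha,\beta})}^{2}$. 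A direct commutator/virial attack — pairing multiplication by $|x|$ with the Jacobi--Cherednik operator $T_{\alpha,\beta}$ — founders on the zeroth-order correction terms produced by the difference part of $T_{\alpha,\beta}$ and by $A'_{\alpha,\beta}/A_{\alpha,\beta}$, which is precisely why one routes everything through the self-adjoint operator $|x|^{2}-\Delta_{\alpha,\beta}$.

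Granting $\nu(t)\ge c\,t$, the conclusion follows by a one-line optimisation. By homogeneity normalise $\|f\|_{L^2(\R,A_{\alpha,\beta})}=1$, so $t^{2}P+Q\ge c\,t$ and $P+t^{2}Q\ge c\,t$ for all $t>0$. If $P\le c/2$, take $t=c/(2P)\ (\ge 1)$ in the first inequality: then $Q\ge c\,t-t^{2}P=c^{2}/(4P)$, i.e.\ $PQ\ge c^{2}/4$. If $Q\le c/2$, the second inequality gives $PQ\ge c^{2}/4$ in the same way. If $P>c/2$ and $Q>c/2$, then $PQ>c^{2}/4$ trivially. In all cases $PQ\ge c^{2}/4$, and taking square roots yields
\[
\|\,|\cdot|f\,\|_{L^2(\R,A_{\alpha,\beta})}\,\|\,|\cdot|\H_{\alpha,\beta}f\,\|_{L^2(\R,\sigma_{\alpha,\beta})}\;\ge\;\frac{c(\alpha,\beta)}{2}\,\|f\|_{L^2(\R,A_{\alpha,\beta})}^{2},
\]
which is the theorem with $C(\alpha,\beta)=c(\alpha,\beta)/2$.
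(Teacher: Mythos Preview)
Your strategy diverges from the paper's, and the step you yourself flag as the crux---the local uncertainty bound $\int_{B_R}|f|^{2}A_{\alpha,\beta}\lesssim R^{2}\|\,|\cdot|\H_{\alpha,\beta}f\,\|_{L^2(\R,\sigma_{\alpha,\beta})}^{2}$---is not merely hard but false as stated. Already in the Euclidean model a unit-$L^2$ centred Gaussian of variance $\sigma^2$ has $\int_{B_R}|f|^2\sim R/\sigma$ while $R^2\int|\xi|^2|\widehat f(\xi)|^{2}\,d\xi\sim R^{2}/\sigma^{2}$, so the ratio diverges as $\sigma\to\infty$; the same phenomenon persists under the weight $A_{\alpha,\beta}$, which is polynomial near the origin. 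The inequality you invoke is of Poincar\'e type and would require $f$ to vanish on $\partial B_R$, which it does not; neither \eqref{eq1} nor \eqref{eq60}, nor the Heckman--Opdam arguments of \cite{john16}, furnish a bound of this strength for unrestricted $f$. Without it the lower bound $\nu(t)\ge ct$ is unproved, and your (otherwise correct) optimisation has no input.

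The paper takes precisely the route you set aside: it dilates, $f_c(x)=f(cx)$ for $c>1$, applies Lemma~\ref{eq007} to $f_c$, and then uses the two-sided estimate \eqref{eq60} on $|C_{\alpha,\beta}(\lambda)|^{-2}$ to show that, up to the fixed multiplicative loss $k_1/k_2$, the three norms $\|\,|\cdot|f_c\,\|_{L^2(\R,A_{\alpha,\beta})}$, $\|\,|\cdot|\H_{\alpha,\beta}f_c\,\|_{L^2(\R,\sigma_{\alpha,\beta})}$ and $\|f_c\|_{L^2(\R,A_{\alpha,\beta})}$ scale in $c$ as though the transform were dilation-covariant. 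This collapses the additive inequality to $c^{-1}P+cQ\ge (k_1^{2}/k_2^{2})\,\lambda_{\min}(|x|^{2}-\Delta_{\alpha,\beta})\,\|f\|_{L^2(\R,A_{\alpha,\beta})}^{2}$ with a \emph{single} spectral constant---no $t$-family of ground-state energies to estimate---after which optimising in $c$ finishes. The moral is that approximate homogeneity of the Plancherel density via \eqref{eq60}, not a Faber--Krahn argument, is what substitutes for the missing dilation symmetry here.
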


\begin{proof}	For $c > 1$, we define  $f_c(x) := f (cx).$ Now, replacing   $f_c$ for $f$ in Lemma \ref{eq007}, we get 
	\begin{align}\label{eq07}
		\|| \cdot | f_c\|_{L^2(\R, A_{\alpha, \beta})}^2+\||\cdot|\H_{\alpha, \beta} f_c\|_{L^2(\R, \sigma_{\alpha, \beta})}^2\geq  \lambda_{\min}{(|x|^2- \Delta_{\alpha, \beta} )  } \| f_c\|_{L^2(\R, A_{\alpha, \beta})}^2.
	\end{align}
Using the Plancherel formula (\ref{eq03}) and the
relation (\ref{eq60}), we obtain 
\begin{align*}  
	\|| \cdot | f_c\|_{L^2(\R, A_{\alpha, \beta})}^2 
&=	\|\H_{\alpha, \beta} (| \cdot | f_c)\|_{L^2(\R, \sigma_{\alpha, \beta})}^2\\
&=\int_{\mathbb{R} }  \left|\H_{\alpha, \beta} (|\lambda|f(c\lambda))\right|^{2} d |\sigma_{\alpha, \beta}| (\lambda)\\
&=\frac{1}{c }\int_{\mathbb{R} }  \left|\H_{\alpha, \beta} \left(\frac{|\lambda|}{c}f( \lambda)\right) \right|^{2} \;\left|1-\frac{c \rho}{i\lambda}\right| \frac{d\lambda}{8 \pi |C_{\alpha, \beta} (\frac{\lambda}{c})|^2} \\
&\leq  \frac{k_2}{8\pi c ^{2\alpha+4}}	\int_{\mathbb{R} } \left|\H_{\alpha, \beta} (|\lambda|f (\lambda))\right|^{2} \;\sqrt{\frac{1}{c^2}+\frac{  \rho^2}{\lambda^2}}   \;| {\lambda} |^{2\alpha+2}\;d\lambda \\
&\leq   \frac{k_2}{8 \pi  c ^{2\alpha+4} }	\int_{\mathbb{R} }  \left|\H_{\alpha, \beta} (|\lambda|f(\lambda))\right|^{2} \;\sqrt{1+\frac{\rho^2}{\lambda^2}}   | {\lambda} |^{2\alpha+2}\;d\lambda \\
&\leq   \frac{k_2}{  k_1 c ^{2\alpha+4} } \int_{\mathbb{R} } \left|\H_{\alpha, \beta} (|\lambda|f (\lambda))\right|^{2} \;\left|1-\frac{ \rho}{i\lambda}\right| \frac{d\lambda}{8 \pi |C_{\alpha, \beta} ( \lambda )|^2} \\
&= \frac{k_2}{  k_1 c ^{2\alpha+4} }\| \H_{\alpha, \beta}(|\cdot|f)\|_{L^2(\R, \sigma_{\alpha, \beta})}^2= \frac{k_2}{  k_1 c ^{2\alpha+4} }\|| \cdot |f\|_{L^2(\R, A_{\alpha, \beta})}^2.
\end{align*}
Moreover,  using the relation (\ref{eq60}), we get 
\begin{align*} 
	\|| \cdot |\H_{\alpha, \beta} f_c\|_{L^2(\R, \sigma_{\alpha, \beta})}^2&=\int_{\mathbb{R} }|\lambda|^2 \left|\H_{\alpha, \beta}f (c\lambda)\right|^{2} d |\sigma_{\alpha, \beta}| (\lambda)\\
	&=\frac{1}{c^3}\int_{\mathbb{R} }|\lambda|^2 \left|\H_{\alpha, \beta} f (\lambda)\right|^{2} \;\left|1-\frac{c \rho}{i\lambda}\right| \frac{d\lambda}{8 \pi |C_{\alpha, \beta} (\frac{\lambda}{c})|^2} \\
		&\leq \frac{k_2}{8 \pi c ^{2\alpha+4} }	\int_{\mathbb{R} }|\lambda|^2 \left|\H_{\alpha, \beta} f (\lambda)\right|^{2} \;\sqrt{\frac{1}{c^2}+\frac{\rho^2}{\lambda^2}}   \;| {\lambda} |^{2\alpha+2}\;d\lambda \\
&\leq \frac{k_2}{8 \pi c ^{2\alpha+4} }	\int_{\mathbb{R} }|\lambda|^2 \left|\H_{\alpha, \beta}f (\lambda)\right|^{2} \;\sqrt{1+\frac{\rho^2}{\lambda^2}}\;   | {\lambda} |^{2\alpha+2}\;d\lambda \\
&\leq \frac{k_2}{  k_1 c ^{2\alpha+4} }\int_{\mathbb{R} }|\lambda|^2 \left|\H_{\alpha, \beta} f (\lambda)\right|^{2} \;\left|1-\frac{ \rho}{i\lambda}\right| \frac{d\lambda}{8 \pi |C_{\alpha, \beta} ( \lambda )|^2} \\
&\leq \frac{k_2 }{ k_1 c ^{2\alpha+4} }	\||\cdot|\H_{\alpha, \beta} f\|_{L^2(\R, \sigma_{\alpha, \beta})}^2.
\end{align*}
Therefore
\begin{align}\label{eq009}\nonumber
&\|| \cdot | f_c\|_{L^2(\R, A_{\alpha, \beta})}^2+\||\cdot|\H_{\alpha, \beta} f_c\|_{L^2(\R, \sigma_{\alpha, \beta})}^2\\&\leq \frac{k_2 }{ k_1 c ^{2\alpha+3} }[c^{-1} \|| \cdot | f\|_{L^2(\R, A_{\alpha, \beta})}^2+c\||\cdot|\H_{\alpha, \beta} f\|_{L^2(\R, \sigma_{\alpha, \beta})}^2].
\end{align} 
Further, using the Plancherel formula (\ref{eq03}) and  the relation (\ref{eq60}),  we get 
\begin{align} \label{eq71}\nonumber
	\|  f_c\|_{L^2(\R, A_{\alpha, \beta})}^2
	&=	\|\H_{\alpha, \beta} f_c\|_{L^2(\R, \sigma_{\alpha, \beta})}^2\\\nonumber
	&=\int_{\mathbb{R} }  \left|\H_{\alpha, \beta} f (c\lambda)\right|^{2} d |\sigma_{\alpha, \beta}| (\lambda)\\\nonumber
	&=\frac{1}{c }\int_{\mathbb{R} }  \left|\H_{\alpha, \beta}f (\lambda)\right|^{2} \;\left|1-\frac{c \rho}{i\lambda}\right| \frac{d\lambda}{8 \pi |C_{\alpha, \beta} (\frac{\lambda}{c})|^2} \\\nonumber
	&\geq \frac{k_1}{8\pi c ^{2\alpha+3}}	\int_{\mathbb{R} } \left|\H_{\alpha, \beta} f (\lambda)\right|^{2} \;\sqrt{1+\frac{c^2 \rho^2}{\lambda^2}}   \;| {\lambda} |^{2\alpha+2}\;d\lambda \\\nonumber
	&\geq  \frac{k_1}{8 \pi c ^{2\alpha+3} }	\int_{\mathbb{R} }  \left|\H_{\alpha, \beta} f (\lambda)\right|^{2} \;\sqrt{1+\frac{\rho^2}{\lambda^2}}   | {\lambda} |^{2\alpha+2}\;d\lambda \\\nonumber
&\geq  \frac{k_1}{  k_2 c ^{2\alpha+3} } \int_{\mathbb{R} } \left|\H_{\alpha, \beta}f (\lambda)\right|^{2} \;\left|1-\frac{ \rho}{i\lambda}\right| \frac{d\lambda}{8 \pi |C_{\alpha, \beta} ( \lambda )|^2} \\
	&= \frac{k_1}{  k_2 c ^{2\alpha+3} }\| \H_{\alpha, \beta} f\|_{L^2(\R, \sigma_{\alpha, \beta})}^2= \frac{k_1}{  k_2 c ^{2\alpha+3} }\|f\|_{L^2(\R, A_{\alpha, \beta})}^2.
\end{align}Therefore, from (\ref{eq07}), (\ref{eq009}) and (\ref{eq71}), we obtain
\begin{align}\label{eq09}
	 [c^{-1} \|| \cdot | f\|_{L^2(\R, A_{\alpha, \beta})}^2+c\||\cdot|\H_{\alpha, \beta} f\|_{L^2(\R, \sigma_{\alpha, \beta})}^2]\geq    \frac{k_1^2 }{  k_2^2  }  \lambda_{\min}{(|x|^2- \Delta_{\alpha, \beta} )  } \|f\|_{L^2(\R, A_{\alpha, \beta})}^2.
\end{align}
Since for any $f\neq 0$, $\frac {\|| \cdot | f\|_{L^2(\R, A_{\alpha, \beta})}}{\||\cdot|\H_{\alpha, \beta} f\|_{L^2(\R, \sigma_{\alpha, \beta})}}>0,$ by the Archimedean property   of real numbers, there exists a natural number $N$  such that $N\frac {\|| \cdot | f\|_{L^2(\R, A_{\alpha, \beta})}}{\||\cdot|\H_{\alpha, \beta} f\|_{L^2(\R, \sigma_{\alpha, \beta})}}>1$. Choosing  $c=N\frac{\||\cdot| f\|_{L^2(\R, A_{\alpha, \beta})}}{\||\cdot|\H_{\alpha, \beta} f\|_{L^2(\R, \sigma_{\alpha, \beta})}}  $, from (\ref{eq09}), we get 
$$\||\cdot| f\|_{L^2(\R, A_{\alpha, \beta})} \; \||\cdot|\H_{\alpha, \beta} f\|_{L^2(\R, \sigma_{\alpha, \beta})} \geq C( \alpha, \beta )\| f\|_{L^2(\R, A_{\alpha, \beta})}^2,
$$ where $C( \alpha, \beta )=\frac{k_1^2  N }{  k_2^2 (N^2+1) }  \lambda_{\min}{(|x|^2- \Delta_{\alpha, \beta} )  }$.
\end{proof}
In the following, we obtain an extension of the HPW  inequality for the Opdam--Cherednik transform  using weights with different exponents. 
\begin{proposition}
		Let $f\in L^2(\R, A_{\alpha, \beta})$ and $a, b\geq 1$. Then  there exists a constant $ C( \alpha, \beta  )>0$ such that 
	$$\||\cdot|^a f\|_{L^2(\R, A_{\alpha, \beta})}^{\frac{b}{a+b}} \; \||\cdot|^b\H_{\alpha, \beta} f\|_{L^2(\R, \sigma_{\alpha, \beta})}^{\frac{a}{a+b}} \geq  C( \alpha, \beta   )^{\frac{a b}{a+b}}\| f\|_{L^2(\R, A_{\alpha, \beta})}.
	$$
\end{proposition}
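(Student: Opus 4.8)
The plan is to bootstrap the statement from the basic Heisenberg--Pauli--Weyl inequality of Theorem \ref{eq13} (which is the case $a=b=1$) by inserting two elementary power-weight interpolation estimates coming from Hölder's inequality.

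First I would record the following: for any $\sigma$-finite measure space $(X,\mu)$, any measurable $g$, and any $a\geq1$, writing $|x|^{2}|g|^{2}=(|x|^{2a}|g|^{2})^{1/a}(|g|^{2})^{1-1/a}$ and applying Hölder with the conjugate exponents $a$ and $a/(a-1)$ gives
$$\int_{X}|x|^{2}|g(x)|^{2}\,d\mu(x)\leq\Big(\int_{X}|x|^{2a}|g(x)|^{2}\,d\mu(x)\Big)^{1/a}\Big(\int_{X}|g(x)|^{2}\,d\mu(x)\Big)^{1-1/a},$$
i.e. $\||\cdot|g\|_{L^{2}(X,\mu)}\leq\||\cdot|^{a}g\|_{L^{2}(X,\mu)}^{1/a}\,\|g\|_{L^{2}(X,\mu)}^{1-1/a}$ (the case $a=1$ being trivial). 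If one of the weighted norms appearing in the statement is infinite, or if $f=0$, the asserted inequality is vacuous, so we may assume all quantities below are finite and $\|f\|_{L^{2}(\R,A_{\alpha,\beta})}\neq0$.

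Applying this estimate with $(X,\mu)=(\R,A_{\alpha,\beta})$, $g=f$, exponent $a$, and with $(X,\mu)=(\R,\sigma_{\alpha,\beta})$, $g=\H_{\alpha,\beta}f$, exponent $b$, and using the Plancherel formula (\ref{eq03}) in the form $\|\H_{\alpha,\beta}f\|_{L^{2}(\R,\sigma_{\alpha,\beta})}=\|f\|_{L^{2}(\R,A_{\alpha,\beta})}$ to rewrite the trailing factor, I obtain
$$\||\cdot|f\|_{L^{2}(\R,A_{\alpha,\beta})}\,\||\cdot|\H_{\alpha,\beta}f\|_{L^{2}(\R,\sigma_{\alpha,\beta})}\leq\||\cdot|^{a}f\|_{L^{2}(\R,A_{\alpha,\beta})}^{1/a}\,\||\cdot|^{b}\H_{\alpha,\beta}f\|_{L^{2}(\R,\sigma_{\alpha,\beta})}^{1/b}\,\|f\|_{L^{2}(\R,A_{\alpha,\beta})}^{2-\frac1a-\frac1b}.$$
Combining with the lower bound of Theorem \ref{eq13} and cancelling the factor $\|f\|_{L^{2}(\R,A_{\alpha,\beta})}^{2-\frac1a-\frac1b}$ gives
$$C(\alpha,\beta)\,\|f\|_{L^{2}(\R,A_{\alpha,\beta})}^{\frac1a+\frac1b}\leq\||\cdot|^{a}f\|_{L^{2}(\R,A_{\alpha,\beta})}^{1/a}\,\||\cdot|^{b}\H_{\alpha,\beta}f\|_{L^{2}(\R,\sigma_{\alpha,\beta})}^{1/b}.$$

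Finally I would raise both sides to the power $\frac{ab}{a+b}>0$ and simplify the exponents using $\frac1a\cdot\frac{ab}{a+b}=\frac{b}{a+b}$, $\frac1b\cdot\frac{ab}{a+b}=\frac{a}{a+b}$ and $\big(\frac1a+\frac1b\big)\cdot\frac{ab}{a+b}=1$, which produces exactly the asserted inequality with $C(\alpha,\beta)$ the same constant as in Theorem \ref{eq13}. There is no genuine obstacle in this argument; the only points needing a word of care are the hypothesis $a,b\geq1$, which is precisely what legitimizes the Hölder exponents $a/(a-1)$ and $b/(b-1)$ (the endpoints $a=1$ or $b=1$ being trivial), and the degenerate cases, which are immediate.
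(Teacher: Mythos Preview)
Your proof is correct and follows essentially the same route as the paper: both arguments use H\"older's inequality to obtain the power-weight interpolation bounds $\||\cdot|f\|_{L^2}\leq\||\cdot|^{a}f\|_{L^2}^{1/a}\|f\|_{L^2}^{1-1/a}$ (and the analogous estimate on the transform side), then feed these into the base case $a=b=1$ of Theorem \ref{eq13} and simplify the exponents. The only cosmetic difference is the order in which you assemble the pieces; the substance is identical.
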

\begin{proof}
Let  $a>1$   and $a'$  such that  $\frac{1}{a}+\frac{1}{a^{\prime}}=1$. Then, we have 
$$
\begin{aligned}
\||\cdot|^a f\|_{L^2(\R, A_{\alpha, \beta})}^{\frac{1}{a}} \|f\|_{L^2(\R, A_{\alpha, \beta})}^{\frac{1}{a'}} &=\left(\int_{\mathbb{R} }|x|^{2a }|f(x)|^{2} A_{\alpha, \beta}(x)\;dx \right)^{\frac{1}{2a} }\left(\int_{\mathbb{R} }|f(x)|^{2} A_{\alpha, \beta}(x)\;dx\right)^{\frac{1}{2a'}} \\
	&=\||\cdot|^{2 }|f|^{\frac{2}{a}}\|_{L^a(\R, A_{\alpha, \beta})}^{\frac{1}{2}}
	 \||f|^{\frac{2}{a'}} \|_{L^{a'}(\R, A_{\alpha, \beta}) }^{\frac{1}{2}}.
\end{aligned}
$$
Further,  using  H\"older's inequality, we get 
 $$
\begin{aligned}
\||\cdot| f\|_{L^2(\R, A_{\alpha, \beta})}^2& \leq\left(\int_{\mathbb{R}}\left(|x|^{2}|f(x)|^{\frac{2}{a}}\right)^{a}A_{\alpha,\beta}(x)\;dx\right)^{\frac{1}{a}}\left(\int_{\mathbb{R}}\left(|f(x)|^{\frac{2}{a'}}\right)^{a'}A_{\alpha,\beta}(x)\;dx\right)^{\frac{1}{a'}} \\
& =\left(\int_{\mathbb{R} } |x|^{2a}|f(x)|^2   A_{\alpha, \beta}(x)\;dx\right)^{\frac{1}{a}} \left(\int_{\mathbb{R} }|f(x)|^{2}  A_{\alpha, \beta}(x)\;dx\right)^{\frac{1}{a'}},
\end{aligned}
$$
and consequently 
 \begin{align}\label{eq11}
 	\| |\cdot|^{a } f   \|_{L^2(\R, A_{\alpha, \beta})}^{\frac{1}{a}} \geq \frac{\||\cdot| f\|_{L^2(\R, A_{\alpha, \beta})}}{\|f\|_{L^2(\R, A_{\alpha, \beta})}^{1-\frac{1}{a}}}.
 \end{align}
 Also, by applying the same argument on  $\H_{\alpha, \beta} f$, we obtain 
 \begin{align}\label{eq12}
 	\| |\cdot |^{b } \H_{\alpha, \beta} f  \|_{L^2(\R, \sigma_{\alpha, \beta})}^{\frac{1}{b}} \geq \frac{\||\cdot| \H_{\alpha, \beta} f\|_{L^2(\R, \sigma_{\alpha, \beta})}}{\|\H_{\alpha, \beta} f\|_{L^2(\R, \sigma_{\alpha, \beta})}^{1-\frac{1}{b}}}.
 \end{align}
From (\ref{eq11}), (\ref{eq12}) and Theorem  \ref{eq13}, we get
$$
\begin{aligned}
\||\cdot|^a f\|_{L^2(\R, A_{\alpha, \beta})}^{\frac{b}{a+b}} \; \||\cdot|^b\H_{\alpha, \beta} f\|_{L^2(\R, \sigma_{\alpha, \beta})}^{\frac{a}{a+b}} &=\left[\||\cdot|^a f\|_{L^2(\R, A_{\alpha, \beta})}^{\frac{1}{a}} \; \||\cdot|^b\H_{\alpha, \beta} f\|_{L^2(\R, \sigma_{\alpha, \beta})}^{\frac{1}{b}} \right]^{\frac{a b}{a+b}} \\
	& \geq \left[\frac{\||\cdot| f\|_{L^2(\R, A_{\alpha, \beta})} \;\||\cdot| \H_{\alpha, \beta} f\|_{L^2(\R, \sigma_{\alpha, \beta})}}{\|f\|_{L^2(\R, A_{\alpha, \beta})}^{1-\frac{1}{a}}\;\|\H_{\alpha, \beta} f\|_{L^2(\R, \sigma_{\alpha, \beta})}^{1-\frac{1}{b}}} \right]^{\frac{a b}{a+b}} \\
	& \geq  C( \alpha, \beta   )^{\frac{a b}{a+b}}\|f\|_{L^2(\R, A_{\alpha, \beta})}^{\left(2-2+\frac{1}{a}+\frac{1}{b}\right) \frac{a b}{a+b}}\\&= C( \alpha, \beta   )^{\frac{a b}{a+b}}\|f\|_{L^2(\R, A_{\alpha, \beta})},
\end{aligned}
$$ where the constant $C( \alpha, \beta   )$ as in Theorem \ref{eq13}.  This completes the proof.
\end{proof}
Next, we give another variation on the HPW inequality for the Opdam--Cherednik transform, which incorporates $L^p$-norms. To prove the result, we use the heat kernel decay estimates. Recall that  $\gamma_{t}(\lambda)=\H_{\alpha, \beta} (E^{\alpha, \beta}_t) (\lambda)=e^ {-t |\lambda|^{2}}$ for all $t > 0$, where $E^{\alpha, \beta}_t$ is  the heat kernel given  in (\ref{eq04}).   In the following lemma, we obtain the decay estimate for the Opdam--Cherednik transform. 

\begin{lemma}\label{eq21}
Let  $p\in (1, 2]$ and $0<a<\frac{1}{q}$, where  $q$ is the conjugate exponent of  $p$. Then  for every $f\in L^p(\R, A_{\alpha, \beta})$  and any $t>1$, there exists a constant $C>0$ such that  
	$$
	\left\| \gamma_{t}  \H_{\alpha, \beta} f \right\|_{L^q(\R, \sigma_{\alpha, \beta})} \leq  Ct^{\frac{ 1}{2q}+1}\; e^{\frac{2\rho t^{\frac{1}{2}}}{q}}  t^{-\frac{a}{2}}\left\|  |\cdot|^a f  \right\|_{L^p(\R, A_{\alpha, \beta})}.
	$$
\end{lemma}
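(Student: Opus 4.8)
The target is a decay estimate for $\gamma_t \H_{\alpha,\beta}f$ in $L^q(\R,\sigma_{\alpha,\beta})$, with $q$ the conjugate of $p\in(1,2]$. The natural strategy is to write the product $\gamma_t \H_{\alpha,\beta}f$ and split it using the heat kernel identity $\gamma_t = \H_{\alpha,\beta}(E_t^{\alpha,\beta})$. Since $\gamma_t(\lambda) = e^{-t|\lambda|^2}$ acts as a Fourier multiplier, $\gamma_t \H_{\alpha,\beta}f = \H_{\alpha,\beta}(E_t^{\alpha,\beta} \ast_{\alpha,\beta} f)$, i.e. the transform of a convolution with the heat kernel. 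The plan is: (i) control $\|\gamma_t \H_{\alpha,\beta}f\|_{L^q(\R,\sigma_{\alpha,\beta})}$ by inserting $|\cdot|^{-a}|\cdot|^a$ inside and distributing the weight, or alternatively by realizing $\gamma_t \H_{\alpha,\beta}f$ as $\H_{\alpha,\beta}$ applied to something controlled by $\||\cdot|^a f\|_{L^p}$, then (ii) use the Hausdorff--Young-type bound \eqref{eq14} together with the decay estimates from Propositions \ref{eq51} and \ref{eq52}.

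Concretely, I would proceed as follows. Write $E_t^{\alpha,\beta} \ast_{\alpha,\beta} f$ for the heat convolution, so that $\gamma_t\H_{\alpha,\beta}f = \H_{\alpha,\beta}(E_t^{\alpha,\beta}\ast_{\alpha,\beta}f)$ by \eqref{eq05} and the multiplicativity of the Opdam--Cherednik transform under (generalized) convolution. Then by \eqref{eq14},
$$
\left\|\gamma_t\H_{\alpha,\beta}f\right\|_{L^q(\R,\sigma_{\alpha,\beta})} \le C_p \left\|E_t^{\alpha,\beta}\ast_{\alpha,\beta}f\right\|_{L^p(\R,A_{\alpha,\beta})}.
$$
Now I would bound the heat convolution by splitting $f = |\cdot|^{-a}\,|\cdot|^a f$ and using Young's inequality for the generalized convolution: $\|E_t^{\alpha,\beta}\ast_{\alpha,\beta}(|\cdot|^{-a}\cdot|\cdot|^a f)\|_{L^p} \le \|E_t^{\alpha,\beta}\|_{?}\cdot\|\,|\cdot|^a f\|_{L^p}$ after isolating the weight. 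Alternatively — and this matches the shape of the claimed constant $t^{\frac1{2q}+1}e^{2\rho t^{1/2}/q} t^{-a/2}$ — I would instead estimate directly in the frequency side: bound $\|\gamma_t\H_{\alpha,\beta}f\|_{L^q(\R,\sigma_{\alpha,\beta})}$ by splitting the integral over $B_{t^{1/2}}$ and its complement, using $\gamma_t(\lambda)=e^{-t|\lambda|^2}\le 1$ on $B_{t^{1/2}}$ and the rapid Gaussian decay off $B_{t^{1/2}}$, combined with the H\"older pairing of $\gamma_t\H_{\alpha,\beta}f$ against $|\cdot|^{-a}\chi_{B_r}$ and $|\cdot|^a f$; here the factor $t^{1/(2q)}e^{2\rho t^{1/2}/q}$ comes from Proposition \ref{eq52} applied with $r=t^{1/2}$, and the extra $t^{-a/2}$ from rescaling the weight $|\cdot|^{-a}$ at scale $t^{1/2}$. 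The power $t^{1}$ should emerge from combining the $B_r$-measure bound of Proposition \ref{eq72}(1) (the $re^{2\rho r}$ factor, $r=t^{1/2}$) with the $L^q$-norm bound on $\gamma_t$ from Proposition \ref{eq51}.

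In more detail for the direct approach: write $\lvert\gamma_t(\lambda)\H_{\alpha,\beta}f(\lambda)\rvert = \lvert\gamma_t(\lambda)\rvert\,\lvert\H_{\alpha,\beta}f(\lambda)\rvert$ and use the elementary bound $\lvert\H_{\alpha,\beta}f(\lambda)\rvert \le \|\,|\cdot|^a f\|_{L^p(\R,A_{\alpha,\beta})}\,\big\||\cdot|^{-a}G^{\alpha,\beta}_\lambda\big\|_{L^{q}(\R,A_{\alpha,\beta})}$ on a ball, obtained from H\"older applied to the defining integral $\H_{\alpha,\beta}f(\lambda)=\int f(x)G^{\alpha,\beta}_\lambda(-x)A_{\alpha,\beta}(x)\,dx$ after inserting $|x|^{-a}|x|^a$; using \eqref{eq1} and Proposition \ref{eq52} with $r=t^{1/2}$ this gives a pointwise bound on $\H_{\alpha,\beta}f$ of order $t^{\frac1{2q}}e^{2\rho t^{1/2}/q}t^{-a/2}\|\,|\cdot|^a f\|_{L^p}$, at least for $|\lambda|$ in the relevant range. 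Then integrate $\lvert\gamma_t\rvert^q$ against $d\sigma_{\alpha,\beta}$ via Proposition \ref{eq51}, which contributes $\big(C + D\,t^{-(\alpha+3/2)}\big)^{1/q}$; for $t>1$ this is bounded by a constant, and folding in the extra power of $t$ from the ball measure produces the stated $t^{\frac1{2q}+1}$. Finally collect the constants into a single $C=C(a,b)>0$ (the role of $b$ here is only to fix notation consistent with the later theorem where this lemma is applied).

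**Main obstacle.** The delicate point is obtaining the \emph{pointwise} bound on $\H_{\alpha,\beta}f(\lambda)$ in terms of $\|\,|\cdot|^a f\|_{L^p}$ with the right growth in $t$: the weight $|x|^{-a}$ is only in $L^q(\R,A_{\alpha,\beta})$ when restricted to a ball (by Proposition \ref{eq52}, which crucially needs $0<a<1/q$), so one must localize to $B_{t^{1/2}}$ and handle the Gaussian tail separately — it is the interplay between the exponential growth $e^{2\rho r}$ of the measure $A_{\alpha,\beta}(B_r)$ and the Gaussian decay $e^{-t\lambda^2}=e^{-r^2\lambda^2/\ldots}$ of $\gamma_t$ that has to be balanced to land exactly on $e^{2\rho t^{1/2}/q}$ rather than something worse. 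Keeping track of which norms live against $A_{\alpha,\beta}$ versus $\sigma_{\alpha,\beta}$, and applying \eqref{eq14}/\eqref{eq66} on the correct side, is where care is needed; the rest is bookkeeping of constants.
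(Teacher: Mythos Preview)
Your plan circles around the right ingredients (Propositions \ref{eq51} and \ref{eq52}, Hausdorff--Young \eqref{eq14}, the $L^1$--$L^\infty$ bound \eqref{eq66}, and the choice $r=t^{1/2}$), but the decomposition you describe is in the wrong variable, and this is a genuine gap. You propose to split the $\lambda$-integral over $B_{t^{1/2}}$ and its complement and use the Gaussian decay of $\gamma_t$ on the tail. The difficulty is that on the frequency tail you still only have $\|\H_{\alpha,\beta}f\|_{L^q(\sigma_{\alpha,\beta})}$ to work with, and there is no mechanism in the paper to convert this into $\||\cdot|^a f\|_{L^p(A_{\alpha,\beta})}$; the weight $|\cdot|^a$ lives on the $x$-side, not the $\lambda$-side. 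Your third ``pointwise'' variant, bounding $|\H_{\alpha,\beta}f(\lambda)|$ by $\||\cdot|^a f\|_{L^p}\,\||\cdot|^{-a}G^{\alpha,\beta}_\lambda\|_{L^q}$, fails for the same reason: Proposition \ref{eq52} only controls $\||\cdot|^{-a}\chi_{B_r}\|_{L^q}$, so H\"older over all of $\R$ is not available, and you have not said what happens to the part of $f$ outside $B_r$.

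The paper instead splits $f$ in \emph{physical} space: write $f = f\chi_{B_r} + f\chi_{B_r^c}$ with $r>1$. For the far piece, $|x|^{-a}\le r^{-a}$ on $B_r^c$ gives the trivial bound $\|f\chi_{B_r^c}\|_{L^p}\le r^{-a}\||\cdot|^a f\|_{L^p}$; then $\|\gamma_t\|_{L^\infty}\le 1$ and \eqref{eq14} yield $\|\gamma_t\H_{\alpha,\beta}(f\chi_{B_r^c})\|_{L^q}\le C_p\,r^{-a}\||\cdot|^a f\|_{L^p}$. For the near piece, H\"older in $\lambda$ gives $\|\gamma_t\H_{\alpha,\beta}(f\chi_{B_r})\|_{L^q}\le \|\gamma_t\|_{L^q}\|\H_{\alpha,\beta}(f\chi_{B_r})\|_{L^\infty}$; then \eqref{eq66} bounds the second factor by $\|f\chi_{B_r}\|_{L^1}$, and a second H\"older in $x$ (now legitimate, since everything is restricted to $B_r$) gives $\|f\chi_{B_r}\|_{L^1}\le \||\cdot|^{-a}\chi_{B_r}\|_{L^q}\||\cdot|^a f\|_{L^p}$. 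Propositions \ref{eq51} and \ref{eq52} now supply the two norms, and setting $r=t^{1/2}$ produces exactly the constant $t^{\frac{1}{2q}+1}e^{2\rho t^{1/2}/q}t^{-a/2}$. The convolution route you first sketch would also require a Young inequality for the Opdam--Cherednik convolution, which the paper does not establish.
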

\begin{proof}
	 Without loss of generality, we assume that $\| | \cdot |^{a} f \|_{L^p(\R, A{\alpha, \beta})}<\infty$.	Let $r>1$  and $B_{r}=\{x  \in \R  : ~| x|\leq r \}$.   Then using    the relation (\ref{eq14}), we get 
	\begin{align}\label{eq19}\nonumber
		\left\| \gamma_{t}  \H_{\alpha, \beta} (f\chi_{B_r^c}) \right\|_{L^q(\R, \sigma_{\alpha, \beta})} &\leq \|\gamma_{t}  \|_{L^\infty(\R, \sigma_{\alpha, \beta})} \left\|  \H_{\alpha, \beta} (f\chi_{B_r^c}) \right\|_{L^q(\R, \sigma_{\alpha, \beta})} \\\nonumber
				&\leq  C_p\left\|  f\chi_{B_r^c} \right\|_{L^p(\R, A{\alpha, \beta})} \\
				&\leq  C_p r^{-a}\left\|  |\cdot |^a f  \right\|_{L^p(\R, A_{\alpha, \beta})} .
	\end{align}
Also, using   H\"older's inequality and the relation (\ref{eq66}), we obtain
	\begin{align*}
	\left\| \gamma_{t}  \H_{\alpha, \beta} (f\chi_{B_r }) \right\|_{L^q(\R, \sigma_{\alpha, \beta})} &\leq \|\gamma_{t}  \|_{L^q(\R, \sigma_{\alpha, \beta})} \left\|  \H_{\alpha, \beta} (f\chi_{B_r }) \right\|_{L^\infty(\R, \sigma_{\alpha, \beta})} \\\nonumber
	&\leq \|\gamma_{t}  \|_{L^q(\R, \sigma_{\alpha, \beta})} \left\|   f\chi_{B_r} \right\|_{L^1(\R, A_{\alpha, \beta})} \\
	&\leq   \|\gamma_{t}  \|_{L^q(\R, \sigma_{\alpha, \beta})}  \left\|  |\cdot |^{-a} \chi_{B_r}  \right\|_{L^q(\R, A_{\alpha, \beta})} \left\|  |\cdot|^a f  \right\|_{L^p(\R, A_{\alpha, \beta})}.
\end{align*}
 Using   estimates of $ \|\gamma_{t}  \|_{L^q(\R, \sigma_{\alpha, \beta})} $ and   $ \left\|  |\cdot|^{-a} \chi_{B_r}  \right\|_{L^q(\R, A_{\alpha, \beta})} $ from Propositions \ref{eq51} and   \ref{eq52}, respectively,  we get 
	\begin{align} \label{eq20} 
	\left\| \gamma_{t}  \H_{\alpha, \beta} (f\chi_{B_r }) \right\|_{L^q(\R, \sigma_{\alpha, \beta})} &\leq    C  \big( 1+t^{-(\alpha+\frac{3}{2})}\big)^{\frac{1}{q}}
\left( r e^{2\rho r}\right)^{\frac{1}{q}}	 \left\|  |\cdot|^a f  \right\|_{L^p(\R, A_{\alpha, \beta})}.
\end{align}
Thus,   from  (\ref{eq19}) and (\ref{eq20}), we obtain
\begin{align*}
	\left\| \gamma_{t}  \H_{\alpha, \beta} f \right\|_{L^q(\R, \sigma_{\alpha, \beta})} &\leq  \left\| \gamma_{t}  \H_{\alpha, \beta} (f\chi_{B_r^c}) \right\|_{L^q(\R, \sigma_{\alpha, \beta})} +	\left\| \gamma_{t}  \H_{\alpha, \beta} (f\chi_{B_r}) \right\|_{L^q(\R, \sigma_{\alpha, \beta})} \\
	&\leq \left[ C_p+Cr^a  \big( 1+ t^{-(\alpha+\frac{3}{2})}\big)^{\frac{1}{q}}
	\left( r e^{2\rho r}\right)^{\frac{1}{q}}\right]r^{-a}\left\|  |\cdot|^a f  \right\|_{L^p(\R, A_{\alpha, \beta})} .
\end{align*}
Choosing  $r = t^{\frac{1}{2}}$, we get 
\begin{align*}
	\left\| \gamma_{t}  \H_{\alpha, \beta} f \right\|_{L^q(\R, \sigma_{\alpha, \beta})}
	 &=\left[ C_p+ Ct^{\frac{a}{2}}  \big( 1+ t^{-(\alpha+\frac{3}{2})}\big)^{\frac{1}{q}} \big( t^{\frac{ 1}{2}}e^{2\rho t^{\frac{1}{2}}}\big)^{\frac{1}{q}}\right]t^{-\frac{a}{2}}\left\|  |\cdot|^a f  \right\|_{L^p(\R, A_{\alpha, \beta})} \\
	 	 &\leq \left[ C_p+    C t^{\frac{ 1}{2q}+1}e^{\frac{2\rho t^{\frac{1}{2}}}{q}} \right]t^{-\frac{a}{2}}\left\|  |\cdot|^a f  \right\|_{L^p(\R, A_{\alpha, \beta})} \\
	 	 &\leq C\; t^{\frac{ 1}{2q}+1}\; e^{\frac{2\rho t^{\frac{1}{2}}}{q}}  t^{-\frac{a}{2}}\left\|  |\cdot|^a f  \right\|_{L^p(\R, A_{\alpha, \beta})}.
\end{align*}
\end{proof}

 \begin{theorem}
Let $a, b>0$. Then  under the same assumptions as in Lemma \ref{eq21}, there exists a constant $C(  a, b)>0 $ such that for all $b\leq 2$, we have 
 	$$
 	\left\|   \H_{\alpha, \beta} f \right\|_{L^q(\R, \sigma_{\alpha, \beta})} \leq  
 C( a, b)\;t_0^{\frac{ 1}{2q}+1}\; e^{\frac{2\rho t_0^{\frac{1}{2}}}{q}}   \||\cdot |^a f\|_{L^p(\R, A_{\alpha, \beta})}^{\frac{b}{a+b}} \; \||\cdot|^b\H_{\alpha, \beta} f\|_{L^q(\R, \sigma_{\alpha, \beta})}^{\frac{a}{a+b}},
 	$$
 	and for all $b>2$, we have 
 	\begin{align*}
 	\left\|   \H_{\alpha, \beta} f \right\|_{L^q(\R, \sigma_{\alpha, \beta})} &\leq  
 	 C(  a, 1)^{\frac{ b(a+1)}{a+b} } \left[ b(b-1)^{\frac{1}{b}-1} \right]^{\frac{ab}{a+b}}\bigg( t_1^{\frac{1}{2q}+1} e^{\frac{2\rho t_1^{\frac{1}{2}}}{q}}\bigg)^{\frac{ b(a+1)}{a+b} }  \\&\qquad \times \||\cdot|^a f\|_{L^p(\R, A_{\alpha, \beta})}^{\frac{b}{a+b}} \; \||\cdot|^b\H_{\alpha, \beta} f\|_{L^q(\R, \sigma_{\alpha, \beta})}^{\frac{a}{a+b}},
 	\end{align*}
 	where $t_0=t_0(a, b)=\left(\frac{a}{b}\right)^{\frac{2}{a+b}}\big(N\frac{\left\||\cdot|^{a} f\right\|_{L^p(\R, A{\alpha, \beta})}}{\||\cdot |^{b} \H_{\alpha, \beta} f  \|_{L^q(\R, \sigma_{\alpha, \beta})}}\big)^{\frac{2}{a+b}}, N\in \mathbb{N}$ and $t_1=t_0(a, 1)$.
 \end{theorem}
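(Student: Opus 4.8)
The plan is to cut $\H_{\alpha,\beta}f$ at a Gaussian frequency scale and then optimise the scale. For $t>1$ write
\[
\H_{\alpha,\beta}f=\gamma_t\,\H_{\alpha,\beta}f+(1-\gamma_t)\,\H_{\alpha,\beta}f ,
\]
with $\gamma_t(\lambda)=e^{-t|\lambda|^2}$. The low-frequency part $\gamma_t\,\H_{\alpha,\beta}f$ is exactly what Lemma~\ref{eq21} controls: $\|\gamma_t\,\H_{\alpha,\beta}f\|_{L^q(\R,\sigma_{\alpha,\beta})}\le C\,\Phi(t)\,t^{-a/2}\,\||\cdot|^a f\|_{L^p(\R,A_{\alpha,\beta})}$, where $\Phi(t):=t^{\frac1{2q}+1}e^{2\rho t^{1/2}/q}$; note $\Phi(t)\ge 1$ for $t>1$ since $\rho>0$. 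For the high-frequency part, when $0<b\le 2$ I would use the elementary inequality $1-e^{-s}\le s^{b/2}$ for all $s\ge 0$ (valid because $1-e^{-s}\le\min(1,s)$ and $b/2\le 1$): it gives $|(1-\gamma_t(\lambda))\H_{\alpha,\beta}f(\lambda)|\le t^{b/2}|\lambda|^b|\H_{\alpha,\beta}f(\lambda)|$ pointwise, hence $\|(1-\gamma_t)\H_{\alpha,\beta}f\|_{L^q(\R,\sigma_{\alpha,\beta})}\le t^{b/2}\||\cdot|^b\H_{\alpha,\beta}f\|_{L^q(\R,\sigma_{\alpha,\beta})}$. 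I may assume throughout that $f\ne 0$ and that $A:=\||\cdot|^a f\|_{L^p(\R,A_{\alpha,\beta})}$ and $B:=\||\cdot|^b\H_{\alpha,\beta}f\|_{L^q(\R,\sigma_{\alpha,\beta})}$ are finite, the estimate being trivial otherwise.

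For $0<b\le 2$, the two estimates and the triangle inequality give
\[
\|\H_{\alpha,\beta}f\|_{L^q(\R,\sigma_{\alpha,\beta})}\le C\,\Phi(t)\,t^{-a/2}A+t^{b/2}B\qquad(t>1).
\]
I would then take $t=t_0=\big(\tfrac ab\,N\tfrac AB\big)^{2/(a+b)}$, with $N\in\mathbb N$ chosen large enough, by the Archimedean property (as in the proof of Theorem~\ref{eq13}), that $t_0>1$. A direct computation shows that $t_0^{-a/2}A$ and $t_0^{b/2}B$ both equal $A^{b/(a+b)}B^{a/(a+b)}$ times an explicit constant; since $\Phi(t_0)\ge 1$ one may therefore bound the right-hand side above by $C(a,b)\,\Phi(t_0)\,A^{b/(a+b)}B^{a/(a+b)}$, which is the assertion for $b\le 2$.

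For $b>2$ the inequality $1-e^{-s}\le s^{b/2}$ fails near $s=0$, so I would reduce to the case already proved by an interpolation argument. The auxiliary estimate needed is
\[
\||\cdot|\H_{\alpha,\beta}f\|_{L^q(\R,\sigma_{\alpha,\beta})}\le b(b-1)^{\frac1b-1}\,\|\H_{\alpha,\beta}f\|_{L^q(\R,\sigma_{\alpha,\beta})}^{1-\frac1b}\,B^{\frac1b},
\]
which I would obtain by splitting $\int|\lambda|^q|\H_{\alpha,\beta}f|^q\,d|\sigma_{\alpha,\beta}|$ over $\{|\lambda|\le R\}$ and $\{|\lambda|>R\}$ (bounding $|\lambda|\le R$ on the first set and $|\lambda|^q\le R^{-(b-1)q}|\lambda|^{bq}$ on the second), taking $q$-th roots, and minimising $R\|\H_{\alpha,\beta}f\|_{L^q(\R,\sigma_{\alpha,\beta})}+R^{-(b-1)}B$ in $R>0$; since $\H_{\alpha,\beta}f\in L^q(\R,\sigma_{\alpha,\beta})$ by (\ref{eq14}) and $B<\infty$, this also shows $\||\cdot|\H_{\alpha,\beta}f\|_{L^q(\R,\sigma_{\alpha,\beta})}<\infty$. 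Applying the case $b\le 2$ with the exponent $1$ in place of $b$ (the hypotheses on $a,p,q$ being unchanged) gives
\[
\|\H_{\alpha,\beta}f\|_{L^q(\R,\sigma_{\alpha,\beta})}\le C(a,1)\,\Phi(t_1)\,A^{\frac1{a+1}}\,\||\cdot|\H_{\alpha,\beta}f\|_{L^q(\R,\sigma_{\alpha,\beta})}^{\frac a{a+1}},\qquad t_1=t_0(a,1).
\]
Inserting the auxiliary estimate here turns it into an inequality $X\le M X^{\delta}$ with $X=\|\H_{\alpha,\beta}f\|_{L^q(\R,\sigma_{\alpha,\beta})}<\infty$, $\delta=\tfrac{a(b-1)}{b(a+1)}\in(0,1)$, and $M=C(a,1)\,\Phi(t_1)\,[b(b-1)^{\frac1b-1}]^{\frac a{a+1}}A^{\frac1{a+1}}B^{\frac a{b(a+1)}}$. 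Since $X<\infty$, solving for $X$ by raising to the power $\tfrac1{1-\delta}=\tfrac{b(a+1)}{a+b}$ and collecting exponents reproduces the stated bound, with the factors $C(a,1)^{\frac{b(a+1)}{a+b}}$, $[b(b-1)^{\frac1b-1}]^{\frac{ab}{a+b}}$ and $\big(t_1^{\frac1{2q}+1}e^{2\rho t_1^{1/2}/q}\big)^{\frac{b(a+1)}{a+b}}$ all arising from $M^{1/(1-\delta)}$.

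The calculus is routine; the substantive points are the case split itself (forced by the failure of $1-e^{-s}\le s^{b/2}$ for $b>2$), the extraction of the sharp constant $b(b-1)^{1/b-1}$ by the right choice of $R$ in the radial splitting, and the bookkeeping ensuring $t_0,t_1>1$ (so that Lemma~\ref{eq21} applies) and $X<\infty$ (so that the implicit inequality for $b>2$ can be solved). I expect the $b>2$ interpolation step — pinning down both the exponent $\tfrac{b(a+1)}{a+b}$ and the constant — to be where the most care is needed.
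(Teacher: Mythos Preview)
Your proposal is correct and follows essentially the same route as the paper: the Gaussian splitting $\H_{\alpha,\beta}f=\gamma_t\H_{\alpha,\beta}f+(1-\gamma_t)\H_{\alpha,\beta}f$, the bound on the high-frequency piece via $\sup_{s>0}s^{-b/2}(1-e^{-s})<\infty$ for $b\le 2$, the Archimedean choice of $t_0$, and for $b>2$ the reduction to $b=1$ via the auxiliary estimate $\||\cdot|\H_{\alpha,\beta}f\|_{L^q}\le b(b-1)^{1/b-1}\|\H_{\alpha,\beta}f\|_{L^q}^{1-1/b}\||\cdot|^b\H_{\alpha,\beta}f\|_{L^q}^{1/b}$ followed by solving the implicit inequality. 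The only cosmetic differences are that the paper obtains the auxiliary estimate from the pointwise inequality $u\le 1+u^b$ (with $u=|\lambda|/\varepsilon$) rather than your radial split over $\{|\lambda|\le R\}$ and $\{|\lambda|>R\}$, and that it factors out $\Phi(t)$ before optimising rather than after; both variants yield the same constants.
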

   \begin{proof}
For a fixed $p \in  (1, 2]$, we assume that $f\in L^p(\R, A_{\alpha, \beta})$  satisfy $ \left\||\cdot|^{a} f\right\|_{L^p(\R, A{\alpha, \beta})}+\||\cdot |^{b} \H_{\alpha, \beta} f  \|_{L^q(\R, \sigma_{\alpha, \beta})}<\infty.$  For all $t>1$,  by Lemma \ref{eq21},  we have
   	
   	\begin{align}\label{eq23}\nonumber
   		\left\|  \H_{\alpha, \beta} f \right\|_{L^q(\R, \sigma_{\alpha, \beta})} &\leq  \left\| \gamma_{t}  \H_{\alpha, \beta} f \right\|_{L^q(\R, \sigma_{\alpha, \beta})} +	\left\|(1-\gamma_{t} )   \H_{\alpha, \beta}f \right\|_{L^q(\R, \sigma_{\alpha, \beta})} \\
   		&\leq C t^{\frac{1}{2q}+1}\; e^{\frac{2\rho t^{\frac{1}{2}}}{q}}  t^{-\frac{a}{2}}\left\|  |\cdot|^a f  \right\|_{L^p(\R, A_{\alpha, \beta})}+	\left\|(1-\gamma_{t} )   \H_{\alpha, \beta}f \right\|_{L^q(\R, \sigma_{\alpha, \beta})} .
   	\end{align}
Moreover, 
\begin{align}\label{eq61}\nonumber
	\left\| (1-\gamma_{t} )   \H_{\alpha, \beta}f \right\|_{L^q(\R, \sigma_{\alpha, \beta})}  &=t^{\frac{b}{2}} \big\|\left(t |\cdot|^{2}\right)^{-\frac{b}{2}} (1-\gamma_{t}) |\cdot |^{b} \H_{\alpha, \beta} f \big\|_{L^q(\R, \sigma_{\alpha, \beta})}\\\nonumber
	&\leq t^{\frac{b}{2}} \big\|\left(t |\cdot|^{2}\right)^{-\frac{b}{2}} (1-\gamma_{t})   \big\|_{L^\infty (\R, \sigma_{\alpha, \beta})} \||\cdot |^{b} \H_{\alpha, \beta} f  \|_{L^q(\R, \sigma_{\alpha, \beta})}\\
	&  \leq C_1 \;t^{\frac{b}{2}} \; \||\cdot |^{b} \H_{\alpha, \beta} f  \|_{L^q(\R, \sigma_{\alpha, \beta})},
\end{align}
  whenever  $b\leq 2.$     Thus, from (\ref{eq23}) and (\ref{eq61}), we have 
\begin{align}\label{eq24}\nonumber
\left\|  \H_{\alpha, \beta} f \right\|_{L^q(\R, \sigma_{\alpha, \beta})} 
&\leq  C \;t^{\frac{1}{2q}+1}\; e^{\frac{2\rho t^{\frac{1}{2}}}{q}}  t^{-\frac{a}{2}}\left\|  |\cdot|^a f  \right\|_{L^p(\R, A_{\alpha, \beta})}+C_1 \;t^{\frac{b}{2}} \||\cdot |^{b} \H_{\alpha, \beta} f  \|_{L^q(\R, \sigma_{\alpha, \beta})}\\
&\leq  C_2\; t^{\frac{ 1}{2q}+1}\; e^{\frac{2\rho t^{\frac{1}{2}}}{q}}   
\left[   t^{-\frac{a}{2}}\| | \cdot|^{a} f \|_{L^p(\R, A{\alpha, \beta})}+  t^{\frac{b}{2}} \||\cdot |^{b} \H_{\alpha, \beta} f  \|_{L^q(\R, \sigma_{\alpha, \beta})}\right].
\end{align}
Since for any $f\neq 0$, $\frac{a\left\||\cdot|^{a} f\right\|_{L^p(\R, A{\alpha, \beta})}}{b\||\cdot |^{b} \H_{\alpha, \beta} f  \|_{L^q(\R, \sigma_{\alpha, \beta})}}>0$, there exists a natural number $N$  such that $N\frac{a\left\||\cdot|^{a} f\right\|_{L^p(\R, A{\alpha, \beta})}}{b\||\cdot |^{b} \H_{\alpha, \beta} f  \|_{L^q(\R, \sigma_{\alpha, \beta})}}>1$.
We consider the function $g$   defined on $[1, \infty)$ by
   $$
   g(t)=   t^{-\frac{a}{2}}\| | \cdot|^{a} f \|_{L^p(\R, A_{\alpha, \beta})}+  t^{\frac{b}{2}} \||\cdot |^{b} \H_{\alpha, \beta} f  \|_{L^q(\R, \sigma_{\alpha, \beta})}.
   $$
   Then  the minimum of the function $g$ is attained at the point
   \begin{align}\label{eq56}
  t_0=\left(\frac{a}{b}\right)^{\frac{2}{a+b}}\bigg(N\frac{\left\||\cdot|^{a} f\right\|_{L^p(\R, A{\alpha, \beta})}}{\||\cdot |^{b} \H_{\alpha, \beta} f  \|_{L^q(\R, \sigma_{\alpha, \beta})}}\bigg)^{\frac{2}{a+b}}
    \end{align}
   and $$
   g(t_0)=\left[ \left(\frac{b}{aN}\right)^{\frac{a}{a+b}} + \left(\frac{aN}{b}\right)^{\frac{b}{a+b}}\right] \||\cdot|^a f\|_{L^p(\R, A_{\alpha, \beta})}^{\frac{b}{a+b}} \; \||\cdot|^b\H_{\alpha, \beta} f\|_{L^q(\R, \sigma_{\alpha, \beta})}^{\frac{a}{a+b}}.$$
   Thus, from (\ref{eq24}),   we get 
   \begin{align}\label{eq25}
   	\left\|  \H_{\alpha, \beta} f \right\|_{L^q(\R, \sigma_{\alpha, \beta})} 
   	&\leq  C(  a, b)\;t_0^{\frac{ 1}{2q}+1}\; e^{\frac{2\rho t_0^{\frac{1}{2}}}{q}}   \||\cdot|^a f\|_{L^p(\R, A_{\alpha, \beta})}^{\frac{b}{a+b}} \; \||\cdot|^b\H_{\alpha, \beta} f\|_{L^q(\R, \sigma_{\alpha, \beta})}^{\frac{a}{a+b}},
   \end{align}
where $C(  a, b)=C_2\left[ \left(\frac{b}{aN}\right)^{\frac{a}{a+b}} + \left(\frac{aN}{b}\right)^{\frac{b}{a+b}}\right]$.

Now, we consider the case   for $b>2$. Since $ u \leq 1+u^{b}$ for all $u>0$, in particular for $u$ of the form    $u=\frac{|\lambda|}{ \varepsilon}$ for all $\varepsilon>0$,   the inequality becomes $\frac{|\lambda|}{ \varepsilon} \leq 1+\left(\frac{|\lambda|}{ \varepsilon}\right)^{b}$. Then, we have
 \begin{align}\label{eq79} 
 	 \| |\cdot|\H_{\alpha, \beta} f  \|_{L^q(\R, \sigma_{\alpha, \beta})}  \leq \varepsilon \| \H_{\alpha, \beta} f  \|_{L^q(\R, \sigma_{\alpha, \beta})}  +\varepsilon^{1-b} \| |\cdot|^b\H_{\alpha, \beta} f  \|_{L^q(\R, \sigma_{\alpha, \beta})}. \end{align} 
Let 
 \begin{align}\label{eq80}  g(\varepsilon)=\varepsilon \|  \H_{\alpha, \beta} f  \|_{L^q(\R, \sigma_{\alpha, \beta})}  +\varepsilon^{1-b} \| |\cdot|^b\H_{\alpha, \beta} f  \|_{L^q(\R, \sigma_{\alpha, \beta})}. \end{align} 
  Then, the minimum of the function $g$ is attain at the point $$ \varepsilon_{0}=(b-1)^{\frac{1}{b}}\left(\frac{\| |\cdot|^b\H_{\alpha, \beta} f  \|_{L^q(\R, \sigma_{\alpha, \beta})}}{\|  \H_{\alpha, \beta} f  \|_{L^q(\R, \sigma_{\alpha, \beta})}}\right)^{\frac{1}{b}}. $$ Optimizing in $\varepsilon_{0}$, from (\ref{eq79}) and (\ref{eq80}), we get
 \begin{align}\label{eq26} 
 	\| |\cdot|\H_{\alpha, \beta} f  \|_{L^q(\R, \sigma_{\alpha, \beta})}  \leq b(b-1)^{\frac{1}{b}-1}\| \H_{\alpha, \beta} f  \|_{L^q(\R, \sigma_{\alpha, \beta})}^{1-\frac{1}{b}}~\| |\cdot|^b\H_{\alpha, \beta} f  \|_{L^q(\R, \sigma_{\alpha, \beta})} ^{\frac{1}{b}}. \end{align} 
Again,  from the relation (\ref{eq25}) with $b=1$, we get  
  \begin{align} \label{eq75}
  	\left\|  \H_{\alpha, \beta} f \right\|_{L^q(\R, \sigma_{\alpha, \beta})}  	\leq C ( a, 1)\;t_1^{\frac{ 1}{2q}+1}\; e^{\frac{2\rho t_1^{\frac{1}{2}}}{q}}\||\cdot|^a f\|_{L^p(\R, A_{\alpha, \beta})}^{\frac{1}{a+1}} \; \||\cdot|\H_{\alpha, \beta} f\|_{L^q(\R, \sigma_{\alpha, \beta})}^{\frac{a}{a+1}}, \end{align} 
   where $t_1=t_0(a, 1).$ Therefore, using the  relation (\ref{eq75}), from (\ref{eq26}), we obtain 
 $$ \begin{aligned} 	\left\|  \H_{\alpha, \beta} f \right\|_{L^q(\R, \sigma_{\alpha, \beta})} &\leq  C ( a, 1) \left[ b(b-1)^{\frac{1}{b}-1} \right]^{\frac{a}{a+1}} \;t_1^{\frac{ 1}{2q}+1}\; e^{\frac{2\rho t_1^{\frac{1}{2}}}{q}} \\&\qquad\times 	\||\cdot |^a f\|_{L^p(\R, A_{\alpha, \beta})}^{\frac{1}{a+1}}\| \H_{\alpha, \beta} f  \|_{L^q(\R, \sigma_{\alpha, \beta})}^{\frac{a(b-1)}{b(a+1)}}~\| |\cdot|^b\H_{\alpha, \beta} f  \|_{L^q(\R, \sigma_{\alpha, \beta})} ^{\frac{a}{b(a+1)}}.
 \end{aligned} $$  
 Thus
 \begin{align*} \left\|  \H_{\alpha, \beta} f \right\|_{L^q(\R, \sigma_{\alpha, \beta})} ^{\frac{a+b} { b(a+1)}} &\leq C(  a, 1) \left[ b(b-1)^{\frac{1}{b}-1} \right]^{\frac{a}{a+1}} \;t_1^{\frac{ 1}{2q}+1} e^{\frac{2\rho t_1^{\frac{1}{2}}}{q}}  \\&\qquad \times  \||\cdot|^a f\|_{L^p(\R, A_{\alpha, \beta})}^{\frac{1} {a+1}}\| |\cdot|^b\H_{\alpha, \beta} f  \|_{L^q(\R, \sigma_{\alpha, \beta})} ^{\frac{a}{ b(a+1)}}, \end{align*}
  and consequently 
  \begin{align*} \left\|   \H_{\alpha, \beta} f \right\|_{L^q(\R, \sigma_{\alpha, \beta})}& \leq  C(  a, 1)^{\frac{ b(a+1)}{a+b} } \left[ b(b-1)^{\frac{1}{b}-1} \right]^{\frac{ab}{a+b}}\bigg( t_1^{\frac{1}{2q}+1} e^{\frac{2\rho t_1^{\frac{1}{2}}}{q}}\bigg)^{\frac{ b(a+1)}{a+b} }  \\&\qquad \times \||\cdot|^a f\|_{L^p(\R, A_{\alpha, \beta})}^{\frac{b}{a+b}} \; \||\cdot|^b\H_{\alpha, \beta} f\|_{L^q(\R, \sigma_{\alpha, \beta})}^{\frac{a}{a+b}}. \end{align*}
	This completes the proof of the theorem. 
   \end{proof}
Next, we study other variations of the  HPW inequality.  In particular, we give the  Nash-type and Clarkson-type inequalities for the Opdam--Cherednik transform. These inequalities involve a mixed $L^1$ and $L^2$ norms estimate. 
  \begin{theorem}[Nash-type inequality]
   	Let $s>0$. Then for every $f \in L^1(\R, A_{\alpha, \beta})\cap L^2(\R, A_{\alpha, \beta})$, there exists a constant $C>0$ such that 
   	$$
	\left\|   \H_{\alpha, \beta} f \right\|_{L^2(\R, \sigma_{\alpha, \beta})}^2	\leq C  \left\| f\right\|_{L^1 (\R, A_{\alpha, \beta})}^{\frac{4s}{2\alpha+3+2s}}	\left\|   |\cdot|^s\H_{\alpha, \beta} f \right\|_{L^2(\R, \sigma_{\alpha, \beta})}^{\frac{2(2\alpha+3)}{2\alpha+3+2s}}.
   	$$
   \end{theorem}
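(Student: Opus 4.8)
The plan is to run the classical Nash-type argument: split the integral defining $\left\| \H_{\alpha, \beta} f \right\|_{L^2(\R, \sigma_{\alpha, \beta})}^2$ over a ball $B_r$ and its complement, bound the low-frequency part using the $L^\infty$-estimate (\ref{eq66}) together with the measure estimate for $B_r$, bound the high-frequency part using the weight $|\cdot|^s$, and then optimize over the splitting radius $r$.

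First I would note that we may assume $\|f\|_{L^1(\R, A_{\alpha, \beta})}$ and $\||\cdot|^s \H_{\alpha, \beta} f\|_{L^2(\R, \sigma_{\alpha, \beta})}$ are both finite, since otherwise the right-hand side is infinite. Fix $r > 1$. On $B_r$, estimate $|\H_{\alpha, \beta} f(\lambda)| \le \|f\|_{L^1(\R, A_{\alpha, \beta})}$ by (\ref{eq66}) and $\sigma_{\alpha, \beta}(B_r) \le C\,r^{2\alpha+3}$ by Proposition \ref{eq72}(2), which gives $\int_{B_r} |\H_{\alpha, \beta} f|^2 \, d|\sigma_{\alpha, \beta}| \le C\|f\|_{L^1(\R, A_{\alpha, \beta})}^2 \, r^{2\alpha+3}$. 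On $B_r^c$ we have $|\lambda| > r$, so $1 \le r^{-2s}|\lambda|^{2s}$ and hence $\int_{B_r^c} |\H_{\alpha, \beta} f|^2 \, d|\sigma_{\alpha, \beta}| \le r^{-2s} \||\cdot|^s \H_{\alpha, \beta} f\|_{L^2(\R, \sigma_{\alpha, \beta})}^2$. Adding the two pieces,
\begin{equation*}
\left\| \H_{\alpha, \beta} f \right\|_{L^2(\R, \sigma_{\alpha, \beta})}^2 \le C\|f\|_{L^1(\R, A_{\alpha, \beta})}^2 \, r^{2\alpha+3} + r^{-2s} \||\cdot|^s \H_{\alpha, \beta} f\|_{L^2(\R, \sigma_{\alpha, \beta})}^2, \qquad r > 1.
\end{equation*}

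It then remains to optimize in $r$. With $A = C\|f\|_{L^1(\R, A_{\alpha, \beta})}^2$ and $B = \||\cdot|^s \H_{\alpha, \beta} f\|_{L^2(\R, \sigma_{\alpha, \beta})}^2$, the function $r \mapsto A r^{2\alpha+3} + B r^{-2s}$ has minimizer $r_0 = \left( \frac{2sB}{(2\alpha+3)A} \right)^{\frac{1}{2\alpha+3+2s}}$ and minimal value a fixed constant times $A^{\frac{2s}{2\alpha+3+2s}} B^{\frac{2\alpha+3}{2\alpha+3+2s}}$; substituting back the values of $A$ and $B$ produces exactly the claimed inequality, with exponent $\frac{4s}{2\alpha+3+2s}$ on $\|f\|_{L^1(\R, A_{\alpha, \beta})}$ and $\frac{2(2\alpha+3)}{2\alpha+3+2s}$ on $\||\cdot|^s \H_{\alpha, \beta} f\|_{L^2(\R, \sigma_{\alpha, \beta})}$. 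The only delicate point, and the one I expect to require the most care, is that Proposition \ref{eq72}(2) is available only for $r > 1$, so the minimization must be carried out within that range: exactly as in Theorem \ref{eq13} and in the preceding theorem, if $r_0 \le 1$ one invokes the Archimedean property to replace $r_0$ by $N r_0 > 1$ for a suitable $N \in \mathbb{N}$ and absorbs the extra power of $N$ into the constant $C$. Apart from this bookkeeping the argument is the routine two-piece Nash estimate.
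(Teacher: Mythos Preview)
Your proposal is correct and matches the paper's proof essentially step for step: the same split over $B_r$ and $B_r^c$, the same use of (\ref{eq66}) and Proposition \ref{eq72}(2) for the low-frequency piece, the same weighted bound on the complement, and the same optimization in $r$ with the Archimedean-property trick to ensure $r>1$. There is nothing to add.
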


\begin{proof}
	Let $f\in  L^2(\R, A_{\alpha, \beta})$,  $r>1$ and     $B_{r}=\{x  \in \R  : ~| x|\leq r \}$.  Then, using  the Plancherel formula (\ref{eq03}), we have 
	\begin{align}\label{eq28}
		\left\|   f \right\|_{L^2(\R, A_{\alpha, \beta})}^2 =	\left\|   \H_{\alpha, \beta} f \right\|_{L^2(\R, \sigma_{\alpha, \beta})}^2= \left\|   \H_{\alpha, \beta} (f\chi_{B_r}) \right\|_{L^2(\R, \sigma_{\alpha, \beta})}^2 +	\left\|   \H_{\alpha, \beta} (f\chi_{B_r^c}) \right\|_{L^2(\R, \sigma_{\alpha, \beta})}^2.
	\end{align}
Now,  from the relation (2) of Proposition \ref{eq72}, we get 
	\begin{align*} 
\left\|   \H_{\alpha, \beta} (f\chi_{B_r}) \right\|_{L^2(\R, \sigma_{\alpha, \beta})}^2=\int_{B_r  }  \left|\H_{\alpha, \beta} f(\lambda)\right|^{2} d |\sigma_{\alpha, \beta}| (\lambda)\leq C_1~\left\|\H_{\alpha, \beta}f\right\|_{L^\infty (\R, \sigma_{\alpha, \beta})}^{2} r^{2\alpha+3}
\end{align*}
and 
	\begin{align*} 
	\left\|   \H_{\alpha, \beta} (f\chi_{B_r^c}) \right\|_{L^2(\R, \sigma_{\alpha, \beta})}^2&=\int_{B_r^c }  \left|\H_{\alpha, \beta} f(\lambda)\right|^{2} d |\sigma_{\alpha, \beta}| (\lambda)\\&\leq r^{-2s}\int_{B_r^c } |\lambda|^{2s}  \left|\H_{\alpha, \beta} f(\lambda)\right|^{2} d |\sigma_{\alpha, \beta}| (\lambda)\\&\leq r^{-2s}	\left\|   |\cdot|^s\H_{\alpha, \beta} f \right\|_{L^2(\R, \sigma_{\alpha, \beta})}^2.
\end{align*}
Hence,  using the relation  (\ref{eq66}), from (\ref{eq28}), we obtain 
	\begin{align}\label{eq29} \nonumber
 	\left\|   \H_{\alpha, \beta} f \right\|_{L^2(\R, \sigma_{\alpha, \beta})}^2
&\leq C_1~ r^{2\alpha+3} \left\|\H_{\alpha, \beta}f\right\|_{L^\infty (\R, \sigma_{\alpha, \beta})}^{2}+r^{-2s}	\left\|   |\cdot|^s\H_{\alpha, \beta} f \right\|_{L^2(\R, \sigma_{\alpha, \beta})}^2\\
&\leq C_1~ r^{2\alpha+3}  \left\| f\right\|_{L^1 (\R, A_{\alpha, \beta})}^{2}+r^{-2s}	\left\|   |\cdot|^s\H_{\alpha, \beta} f \right\|_{L^2(\R, \sigma_{\alpha, \beta})}^2.
\end{align}
Since for any $f\neq 0$, $ \frac{	\left\|   |\cdot|^s\H_{\alpha, \beta} f \right\|_{L^2(\R, \sigma_{\alpha, \beta})}^2}{\left\| f\right\|_{L^1 (\R, A_{\alpha, \beta})}^{2}}>0,$ by the Archimedean property   of real numbers, there exists a natural number $N$  such that $N\frac{	\left\|   |\cdot|^s\H_{\alpha, \beta} f \right\|_{L^2(\R, \sigma_{\alpha, \beta})}^2}{\left\| f\right\|_{L^1 (\R, A_{\alpha, \beta})}^{2}}>1$. Then, the  right hand side of (\ref{eq29}) is minimized for  $r^{2\alpha+3+2s}=N\frac{	\left\|   |\cdot|^s\H_{\alpha, \beta} f \right\|_{L^2(\R, \sigma_{\alpha, \beta})}^2}{\left\| f\right\|_{L^1 (\R, A_{\alpha, \beta})}^{2}}$. Therefore 
\begin{align*} 
	\left\|   \H_{\alpha, \beta} f \right\|_{L^2(\R, \sigma_{\alpha, \beta})}^2	
	\leq C \left\| f\right\|_{L^1 (\R, A_{\alpha, \beta})}^{\frac{4s}{2\alpha+3+2s}}	\left\|   |\cdot|^s\H_{\alpha, \beta} f \right\|_{L^2(\R, \sigma_{\alpha, \beta})}^{\frac{2(2\alpha+3)}{2\alpha+3+2s}},
\end{align*}
where $C=(C_1 N+1)N^{-\frac{2s}{2\alpha+3+2s}}.$
\end{proof}

\begin{theorem}[Clarkson-type inequality]
	 Let $s>0$ and $N\in \mathbb{N}$.  Then for every $f \in L^1(\R, A_{\alpha, \beta})\cap L^2(\R, A_{\alpha, \beta})$, there exists a constant $C>0$ such that  
	 $$	\left\|     f \right\|_{L^1(\R, A_{\alpha, \beta})} 
	 \leq C \exp\left\{ {\rho \left(N\frac{	\left\|   |\cdot|^{2s}  f \right\|_{L^1(\R, A_{\alpha, \beta})}}{\left\| f\right\|_{L^2 (\R, A_{\alpha, \beta})}}\right)^{\frac{2}{1+4s}}}\right\}  \left\| f\right\|_{L^2 (\R, A_{\alpha, \beta})}^{\frac{4s}{1+4s}}	\left\|   |\cdot|^{2s}  f \right\|_{L^1(\R, A_{\alpha, \beta})}^{\frac{1}{1+4s}}.$$
\end{theorem}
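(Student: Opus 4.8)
The plan is to mimic, on the physical side, the splitting argument already used for the Nash-type inequality; no use of the transform itself is actually needed. Fix a nonzero $f \in L^1(\R, A_{\alpha, \beta}) \cap L^2(\R, A_{\alpha, \beta})$ and assume $\||\cdot|^{2s} f\|_{L^1(\R, A_{\alpha, \beta})} < \infty$, since otherwise the right-hand side is infinite and there is nothing to prove. For $r > 1$ and $B_r = \{x \in \R : |x| \le r\}$, I would write $f = f\chi_{B_r} + f\chi_{B_r^c}$ and estimate the two pieces separately in $L^1(\R, A_{\alpha, \beta})$.

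For the part supported in $B_r$, the Cauchy--Schwarz inequality together with the measure estimate in the relation (1) of Proposition \ref{eq72} gives
\[
\|f\chi_{B_r}\|_{L^1(\R, A_{\alpha, \beta})} \le \|f\|_{L^2(\R, A_{\alpha, \beta})}\, A_{\alpha, \beta}(B_r)^{1/2} \le \sqrt{2r}\; e^{\rho r}\, \|f\|_{L^2(\R, A_{\alpha, \beta})}.
\]
For the part supported in $B_r^c$, on which $|x|^{2s} \ge r^{2s}$,
\[
\|f\chi_{B_r^c}\|_{L^1(\R, A_{\alpha, \beta})} = \int_{|x| > r} |f(x)|\, A_{\alpha, \beta}(x)\, dx \le r^{-2s}\, \||\cdot|^{2s} f\|_{L^1(\R, A_{\alpha, \beta})}.
\]
Adding these, for every $r > 1$ I obtain
\[
\|f\|_{L^1(\R, A_{\alpha, \beta})} \le \sqrt{2r}\; e^{\rho r}\, \|f\|_{L^2(\R, A_{\alpha, \beta})} + r^{-2s}\, \||\cdot|^{2s} f\|_{L^1(\R, A_{\alpha, \beta})}.
\]

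It then remains to choose $r$. Disregarding the mild factor $e^{\rho r}$, the two terms carry the powers $r^{1/2}$ and $r^{-2s}$, which are balanced when $r^{(1+4s)/2}$ is proportional to $\||\cdot|^{2s} f\|_{L^1(\R, A_{\alpha, \beta})} / \|f\|_{L^2(\R, A_{\alpha, \beta})}$. By the Archimedean property, choose $N \in \mathbb{N}$ with $N \frac{\||\cdot|^{2s} f\|_{L^1(\R, A_{\alpha, \beta})}}{\|f\|_{L^2(\R, A_{\alpha, \beta})}} > 1$ and set
\[
r = \left(N \frac{\||\cdot|^{2s} f\|_{L^1(\R, A_{\alpha, \beta})}}{\|f\|_{L^2(\R, A_{\alpha, \beta})}}\right)^{\frac{2}{1+4s}} > 1.
\]
A short computation shows that $\sqrt{r}\,\|f\|_{L^2(\R, A_{\alpha, \beta})} = N^{\frac{1}{1+4s}}\,\|f\|_{L^2(\R, A_{\alpha, \beta})}^{\frac{4s}{1+4s}} \||\cdot|^{2s} f\|_{L^1(\R, A_{\alpha, \beta})}^{\frac{1}{1+4s}}$ and $r^{-2s}\,\||\cdot|^{2s} f\|_{L^1(\R, A_{\alpha, \beta})} = N^{-\frac{4s}{1+4s}}\,\|f\|_{L^2(\R, A_{\alpha, \beta})}^{\frac{4s}{1+4s}} \||\cdot|^{2s} f\|_{L^1(\R, A_{\alpha, \beta})}^{\frac{1}{1+4s}}$, while $e^{\rho r}$ is exactly the exponential appearing in the statement. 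Bounding the second term using $e^{\rho r} \ge 1$, this yields the claimed inequality with $C = \sqrt{2}\, N^{\frac{1}{1+4s}} + N^{-\frac{4s}{1+4s}}$.

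I do not expect a genuine obstacle: the scheme is identical to that of the Nash-type inequality. The only slightly delicate points are that the radius must satisfy $r > 1$ in order to apply Proposition \ref{eq72} (which is arranged by the Archimedean choice of $N$), and that the exponential growth $e^{\rho r}$ coming from the $A_{\alpha,\beta}$-measure of $B_r$ must be kept as an explicit $f$-dependent factor rather than absorbed into the absolute constant $C$ --- it is precisely this term that produces the $\exp\{\cdot\}$ in the statement.
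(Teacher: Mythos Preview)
Your proposal is correct and follows essentially the same approach as the paper: split $\|f\|_{L^1}$ over $B_r$ and $B_r^c$, apply Cauchy--Schwarz together with Proposition~\ref{eq72}(1) on $B_r$, the trivial weight bound on $B_r^c$, and then optimize with $r^{(1+4s)/2}=N\,\||\cdot|^{2s}f\|_{L^1}/\|f\|_{L^2}$. The only cosmetic difference is that the paper factors $e^{\rho r}$ out of both terms before optimizing, whereas you insert it into the second term via $e^{\rho r}\ge 1$ afterwards; the resulting constant $C=\sqrt{2}\,N^{1/(1+4s)}+N^{-4s/(1+4s)}=(2^{1/2}N+1)N^{-4s/(1+4s)}$ coincides with the paper's.
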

  \begin{proof}
  	Let $f \in L^1(\R, A_{\alpha, \beta})\cap L^2(\R, A_{\alpha, \beta})$, $r>1$ and      $B_{r}=\{x  \in \R  : ~| x|\leq r \}$. Using H\"older's  inequality and  the relation (1) of  Proposition \ref{eq72}, we get 
  		\begin{align} \label{eq34}\nonumber
  		\left\|   f \right\|_{L^1(\R, A_{\alpha, \beta})}&=  \int_{B_r } |f(x)| A_{\alpha, \beta}(x)\;dx+\int_{B_r^c } |f(x)| A_{\alpha, \beta}(x)\;dx\\\nonumber
  		&\leq A_{\alpha, \beta}(B_r )^{\frac{1}{2}}\left\|   f \right\|_{L^2(\R, A_{\alpha, \beta})}+r^{-2s}\int_{B_r^c } |x|^{2s}|f(x)| A_{\alpha, \beta}(x)\;dx\\\nonumber
  		  		&\leq   2^{\frac{1}{2}} r^{\frac{1}{2}}e^{\rho r} \left\|   f \right\|_{L^2(\R, A_{\alpha, \beta})}+r^{-2s}\| |\cdot|^{2s}f\|_{L^1{(\R, A_{\alpha, \beta})}}\\
  		  		&\leq   e^{\rho r}( 2^{\frac{1}{2}} r^{\frac{1}{2}} \left\|   f \right\|_{L^2(\R, A_{\alpha, \beta})}+r^{-2s}\| |\cdot|^{2s}f\|_{L^1{(\R, A_{\alpha, \beta})}}).
  	\end{align}
  
  Since for any $f\neq 0$, $ \frac{	\left\|   |\cdot|^{2s}  f \right\|_{L^1(\R, A_{\alpha, \beta})}}{\left\| f\right\|_{L^2 (\R, A_{\alpha, \beta})}}>0,$ by the Archimedean property   of real numbers, there exists a natural number $N$  such that $N\frac{	\left\|   |\cdot|^{2s}  f \right\|_{L^1(\R, A_{\alpha, \beta})}}{\left\| f\right\|_{L^2 (\R, A_{\alpha, \beta})}}>1$.  Then,  the   right hand side of (\ref{eq34}) is minimized for   $r^{\frac{1}{2}+2s}=N\frac{	\left\|   |\cdot|^{2s}  f \right\|_{L^1(\R, A_{\alpha, \beta})}}{\left\| f\right\|_{L^2 (\R, A_{\alpha, \beta})}}$. Therefore 
  \begin{align*} 
  	\left\|     f \right\|_{L^1(\R, A_{\alpha, \beta})} 
  	&\leq C \exp\left\{ {\rho \left(N\frac{	\left\|   |\cdot|^{2s}  f \right\|_{L^1(\R, A_{\alpha, \beta})}}{\left\| f\right\|_{L^2 (\R, A_{\alpha, \beta})}}\right)^{\frac{2}{1+4s}}}\right\}  \left\| f\right\|_{L^2 (\R, A_{\alpha, \beta})}^{\frac{4s}{1+4s}}	\left\|   |\cdot|^{2s}  f \right\|_{L^1(\R, A_{\alpha, \beta})}^{\frac{1}{1+4s}},
  \end{align*}
where $C=( 2^{\frac{1}{2}} N+1)N^{-\frac{4s}{1+4s}}.$
  \end{proof} 
\section*{Acknowledgments}
The first author gratefully acknowledges the support provided by IIT Delhi, Government of India.  The second author   is deeply indebted to Prof. Nir Lev for several fruitful discussions and generous comments. The second author is also grateful to the Science and Engineering Research Board (SERB), Government of India for providing the National Post-Doctoral Fellowship [File No. PDF/2021/000192].

\end{document}